\documentclass[10pt, a4paper, twoside]{amsart}

\usepackage[english]{babel} 
\usepackage[utf8]{inputenc}
\usepackage[T1]{fontenc}
\usepackage[shortlabels]{enumitem}
\usepackage{comment}
\usepackage{appendix}
\usepackage{latexsym,bm,lmodern,xcolor,graphicx,setspace,pdfpages,
geometry,marginnote,caption,array,siunitx,eurosym,colortbl,float,wrapfig,vmargin,tikz,hyperref}
\usepackage{mathtools, mathabx, amssymb, commath}

\setmarginsrb{30mm}{20mm}{30mm}{30mm}   
                 {8mm}{6mm}{10mm}{10mm} 
\definecolor{Viola}{RGB}{188,30,188}
\definecolor{Blu}{RGB}{0, 44, 92}
\hypersetup{colorlinks=true,linkcolor=Blu,allbordercolors=white, urlcolor=blue, citecolor=Viola}

\theoremstyle{plain}
\newtheorem{theorem}{Theorem}[section]
\newtheorem{lemma}[theorem]{Lemma}
\newtheorem{proposition}[theorem]{Proposition}
\newtheorem{corollary}[theorem]{Corollary}
\newtheorem{conjecture}{Conjecture}
\theoremstyle{definition}
\newtheorem{definition}[theorem]{Definition}
\newtheorem{example}[theorem]{Example}
\theoremstyle{remark}
\newtheorem{remark}[theorem]{Remark}

\DeclareMathOperator{\upper}{\mathbb{H}}
\DeclareMathOperator{\uppertwo}{\mathbb{H}^2}       
\DeclareMathOperator{\uppern}{\mathbb{H}^{\mathit{d}}}
\DeclareMathOperator{\upperd}{\mathbb{H}^{\mathit{d}}}
\DeclareMathOperator{\R}{\mathbb{R}}
\DeclareMathOperator{\Rtwo}{\mathbb{R}^2}
\DeclareMathOperator{\Rn}{\mathbb{R}^{\mathit{d}}}
\DeclareMathOperator{\Rd}{\mathbb{R}^{\mathit{d}}}
\DeclareMathOperator{\C}{\mathbb{C}}

\DeclareMathOperator{\im}{Im}
\DeclareMathOperator{\re}{Re}
\DeclareMathOperator{\supp}{supp}
\DeclareMathOperator{\D}{\mathbb{D}}

\DeclareMathOperator{\tohat}{\hat{\to}}
\DeclareMathOperator{\Ptwo}{\mathcal{P}_2}
\DeclareMathOperator{\Pn}{\mathcal{P}_{\mathit{d}}}
\DeclareMathOperator{\Z}{\mathcal{Z}}

\def\HNf{Herglotz-Nevanlinna function}

\title[Real rational HN-functions and extremals]{Real rational Herglotz-Nevanlinna functions in two variables and a class of extremals }

\author{Ludvig Fagrell}
\address{Independent Researcher}
\email{fagrell.ludvig@gmail.com}

\author{Annemarie Luger}
\address{Department of Mathematics, Stockholm University, 106 91 Stockholm, Sweden}
\email{luger@math.su.se}

\date{October 1, 2026}

\begin{document}

\begin{abstract}
We first revisit the Nevanlinna condition for representing measures of Herglotz-Nevanlinna functions in two variables and give a general operator-theoretic criterion for extremality. We then derive an explicit formula for the representing measure of a real rational \HNf{} in two variables, and combine it with the extremality criterion to obtain a family of extreme elements in the full cone of Nevanlinna measures. The family includes examples with reducible denominators, and these yield counterexamples to the irreducibility assertion in a conjecture of Knese. We then characterize reducibility within this family both algebraically and geometrically.
\end{abstract}
\maketitle

\section{\bfseries Introduction}

A holomorphic function $h:\uppern\to\C$ is called a \emph{\HNf} if its imaginary part is nonnegative. Whereas for $d=1$ this is a very well studied class, in the case $d \geq 2$ much less is known. In \cite{Luger_2019} a result for the polydisk, see \cite{KoranyiPukanszky1963}, has been used in order to show that such functions admit an integral representation in terms of constants $a\in\R$, $b\in [0,\infty)^d$ and a 
positive Borel measure on $\Rd$ satisfying a natural growth condition and an additional constraint called the \emph{Nevanlinna condition}. Lately these representing measures have been referred to as \emph{Nevanlinna measures} \cite{ Luger_2020_geometry_nevanlinna_measure, Nedic2020, nedic2022structurenevanlinnameasures}.

Nevanlinna measures are still not very well understood  since the defining condition describes them  only implicitly as the solution set of infinitely many equations. In \cite{nedic2022structurenevanlinnameasures}, the authors obtained an alternative Fourier transform characterization of Nevanlinna measures. In case $d=2$, yet another way to view Nevanlinna measures is proposed in the current text. To this end we introduce an integral operator $\mathcal{T}_\mu$ associated with any two-dimensional sufficiently bounded Borel measure $\mu$. The Nevanlinna condition then reads as $\mathcal{T}_\mu1=0$, cf., Subsection \ref{section: extremality criterion}. 

The set of Nevanlinna measures on $\Rd$ forms a convex cone in the real vector space of signed Borel measures on $\Rd$. It is therefore natural to ask which measures are its extremal elements. Equivalently, one may ask when the corresponding \HNf{} has a nontrivial decomposition into \HNf s. In the case $d=1$, it is well known that the extremal measures are precisely the point measures. These correspond to (positive multiples of) functions of the form $1/(\tau-z)$, where $\tau$ is a real number. For $d>1$, however, the Nevanlinna condition makes the geometry of the cone substantially more complicated, and presently the problem of extremality remains largely unresolved; only partial results exist. For example, extremal functions in one variable are also extremal when lifted to several variables, and for such a function the representing measure is a product measure of a point measure and a Lebesgue-measure on $\R^{d-1}$. See \cite{Nedic2020} for a characterization of general product Nevanlinna measures. The set of extremal measures is obviously much larger than the collection of extremal product measures, not least because extremal Nevanlinna measures can be absolutely continuous \cite{acRPmeasure}.

Additional partial results on the extremality problem have been obtained for some particularly rigid subclasses of Nevanlinna measures. In \cite{Luger_2020_geometry_nevanlinna_measure},  restrictions of Nevanlinna measures to affine subspaces are studied, and it was shown that coordinate orthogonal hyperplane components yield natural decompositions. In \cite{nedic2022structurenevanlinnameasures} an explicit description of Nevanlinna measures supported on hyperplanes is obtained. Within this class, the extremal measures were determined as those with densities which are squares of affine functions. 

Our contribution here is an operator-theoretic criterion for extremality of $\mu$ in terms of the kernel of $\mathcal{T}_\mu$. Specifically, $\mu$ is extremal precisely when $\ker\mathcal{T}_\mu=\mathbb{R}\mathbf{1}$. Part of the underlying argument is reminiscent of the perturbation of polynomials discussed in \cite{KneseConjecturePaper} for rational inner functions. 

We also want to highlight results  for the related class of analytic functions from the polydisk $\mathbb D^d$ to the right half-plane. In particular, we want to mention the foundational works by McDonald \cite{McDonald1,  ExamplesOfRPMeasures, McDonald2,acRPmeasure} as well as more recent results including \cite{clark_measures_two, clark_measures_one} and \cite{KneseConjecturePaper}.

The current paper then specializes to the important subclass of real rational Herglotz-Nevanlinna functions, i.e., rational Herglotz-Nevanlinna functions which (except for the singularities) are real-valued on $\mathbb R^d$.
 There is a closely related theory in the polydisc where the parallel objects are \emph{rational inner functions} and their corresponding \emph{Clark measures}. Explicit descriptions of Clark measures for rational inner functions on the bidisc were obtained in \cite{clark_measures_one} for bidegree $(m,1)$ and extended by Anderson et al.~\cite{clark_measures_two} to general bidegrees. In particular, \cite{clark_measures_two} expresses the Clark measure in terms of analytic functions that parametrize level sets of the underlying rational inner function. More recently, Calzi \cite{newClarkStudy} obtained a similar description of Clark measures for rational inner functions on general bounded symmetric domains. Since the poly-upper half-plane and polydisc settings are biholomorphically equivalent, an analogous description of Nevanlinna measures for real rational Herglotz-Nevanlinna functions in two variables has to exist. Results may in principle be transferred between them \cite[Section 2.3]{nedic2022structurenevanlinnameasures}, however, this transformation is not completely transparent. We therefore derive the analogous structure formula directly from the Herglotz-Nevanlinna integral representation. Specifically, the representing measure of a real rational \HNf{} in two variables can be written as a sum of terms of the form
\begin{equation*}
     (\gamma)_*(\alpha\lambda|_{\R\setminus S}) \quad\text{and}\quad  c_{z}(\tau)(\delta_{\tau}\times\lambda|_{\R\setminus S}),
\end{equation*}
where $\lambda$ denotes the Lebesgue measure, $\gamma$ is the graph of a function on $\R\setminus S$ which parametrizes (part of) the real zero set of the denominator, $S\subset\R$ is a finite exceptional set, $\alpha$ is a weight function which is determined by the numerator and denominator, and $c_z(\tau)$ is a nonnegative constant obtained from a particular limit of the function. An easy consequence of the structure formula is the identification of the support of the measure with the real zero set of the denominator.

From here, we consider the extremality problem for the subclass of real rational Herglotz-Nevanlinna functions in two variables. 
Using the structure formula we refine the operator-theoretic criterion for extremality in a way that allows us to express the problem in terms of the algebraic information encoded in the corresponding function. This enables us  to identify certain subclasses of extreme functions. 

As a first observation it can be mentioned that for real rational Herglotz-Nevanlinna functions of total degree 2 in the denominator it is straight forward to characterize the extremal functions. In particular,  in this subclass the Herglotz-Nevanlinna property forces  in  all but one case the denominator to factorize and, moreover, whenever the denominator factors, the function is not extremal. It is then natural to expect that irreducibility of the denominator would be a necessary condition for extremality. This was also conjectured in the polydisk case by Knese in \cite{KneseConjecturePaper}. But it turns out to be false, as illustrated in Subsection \ref{section: extremal_example} by the simple example 
\begin{equation}\label{eq: func in intro}
    (z,w)\mapsto -\frac{zw}{(z+w)(zw-1)}.
\end{equation}
The structure formula gives an explicit description of its representing measure, which is supported on the two real algebraic curves $w=-z$ and $w=1/z$ that make the denominator's real zero set. Despite this decomposition of the support, the function admits no nontrivial decomposition into real rational Herglotz-Nevanlinna functions. More than that, we show that its representing measure is extremal in the full cone of Nevanlinna measures. 

The method by which we obtain this result is generalized as to obtain a whole family of extremal real rational Herglotz-Nevanlinna functions. These are of the form
\begin{equation}\label{eq: additional func in intro}
    -\frac{1}{\varphi(z)+w-\frac{1}{w}},
\end{equation}
where $\varphi$ is a nonconstant one-variable real rational Herglotz-Nevanlinna function. The motivating example \eqref{eq: func in intro} is of this form as well, with $\varphi(z)=z-1/z$.

The last part of the paper connects these results to the conjecture of Knese \cite{KneseConjecturePaper}. In particular, after showing that the example \eqref{eq: func in intro} makes  a counterexample to the irreducibility assertion of the conjecture, we show that the same phenomenon occurs within the extremal family constructed above precisely when the denominator is reducible. We therefore give several characterizations of reducibility of the denominator within this family; for example, the denominator is reducible if and only if the two parametrizing functions of its real zero set are the boundary values of $-U$ and $1/U$ where $U$ is a nonconstant one-variable real rational Herglotz-Nevanlinna function. We end the paper with the construction of an infinite family of counterexamples to Knese's conjecture  by choosing $\varphi = \mathcal{J}^{\circ k}$, $k\geq 1$, in \eqref{eq: additional func in intro} where $\mathcal{J}(w):=w-1/w$.

The paper is organized as follows. In Section~\ref{section: prelims} we recall the Herglotz-Nevanlinna integral representation and related preliminaries. We then give the general operator-theoretic criterion for extremality, establish the basic properties of real rational Herglotz-Nevanlinna functions needed later, and derive the structure formula for their representing measures in two variables. In Section~\ref{section: extremal_theory} we study the motivating example \eqref{eq: func in intro} and use it to develop the family of extremal real rational \HNf s. In Section~\ref{section: conjecture} we pass to the bidisc, obtain a counterexample to the irreducibility assertion of Knese's conjecture, characterize reducibility within the extremal family, and produce an infinite family of counterexamples.

\section{\bfseries Preliminaries,  a criterion for extremality, and the structure formula}\label{section: prelims}

In this section we collect some well known results, such as the Herglotz-Nevanlinna integral representation. Moreover, we recall the  notion of extremality and  we give a new criterion for a Nevanlinna measure on $\mathbb R^2$ to be extremal. We also present the structure formula for the representing measures of real rational \HNf s in two variables.

\subsection{Preliminaries}

We denote by $\mathbb H$ the complex upper  half-plane. By definition a function $h:\mathbb H^d\to\mathbb C$ is a \emph{Herglotz-Nevanlinna function} if it is holomorphic and satisfies $\im h(z)\geq0$ for all $z\in\upperd$. Alternatively, these functions can be characterized via an integral representation. Theorem 4.1 in \cite{Luger_2019} states that a function $h:\uppern\to\C$ is a \HNf{} if and only if it can be written as 
\begin{equation}\label{eq: integral representation}
    h(z)=a+\sum_{j=1}^d b_jz_j+\frac{1}{\pi^d}\int_{\Rn}K_n(z,t)\,d\mu(t),
\end{equation}
where $a\in\R$, $b\in[0,\infty)^d$, $K_d$ is the kernel function
\begin{equation}\label{eq: kernel}
    (z,t)\mapsto\frac{1}{(2i)^{d-1}}\prod_{j=1}^d\left(\frac{1}{t_j-z_j}-\frac{1}{t_j+i}\right)-i\prod_{j=1}^d\frac{1}{t_j^2+1}, \quad (z,t)\in\uppern\times\Rn,
\end{equation}
and $\mu$ is a \emph{Nevanlinna measure}. A Nevanlinna measure on $\Rn$ is a positive Borel measure satisfying the growth condition
\begin{equation}\label{eq: growth condition Nevanlinna measure}
    \int_{\Rn}\prod_{j=1}^d\frac{1}{1+t_j^2}\,d\mu(t)<\infty,
\end{equation}
as well as the \emph{Nevanlinna condition}:
\begin{equation*}
    \int_{\Rn} \frac{1}{(t_{j_1}-z_{j_1})^2(t_{j_2}-\overline{z_{j_2}})^2} \prod_{\substack{j=1\\j\neq j_1,j_2}}^d\frac{2\im z_j}{\abs{t_j-z_j}^2}\,d\mu(t)=0
\end{equation*}
for all $z\in\uppern$ and for every pair of indices $j_1,j_2\in\mathbb{N}$ with $1\leq j_1<j_2\leq d$. In two variables, the Nevanlinna condition reduces to
\begin{equation}\label{eq: nevanlinna condition two variables}
    \int_{\Rtwo}\frac{1}{(u-z)^2(v-\overline{w})^2}\,d\mu(u,v) = 0, \quad  (z,w)\in\uppertwo.
\end{equation}

The representing \emph{data} $(a,b,\mu)$ from \eqref{eq: integral representation} are uniquely determined by $h$.

We recall from \cite[Corollary 4.6]{Luger_2019} also some important consequences of the integral representation.
\begin{corollary}\label{cor: main cor}
    Let $h$ be a \HNf{} of $d$ variables and let $(a,b,\mu)$ be its representing data. Then the following statements hold.
    \begin{enumerate}[label=(\roman*)]
        \item Let $\tau\in\R$. Then, for each $j\in\{1,\dots,d\},$ there is a nonnegative number $c_{z_j}(\tau)$ such that
        \begin{equation*}
            \lim_{z_j\tohat \tau} (\tau-z_j)h(z) =:  c_{z_j}(\tau).
        \end{equation*}
        In particular, the limit exists and is independent of the remaining coordinates. Here the notation $z_j\tohat\tau$  denotes the nontangential limit $z_j\to\tau$ in any \emph{Stolz domain} $\{z\in\upper:\,\theta\leq\arg (z-\tau)\leq \pi-\theta\}$, where $0<\theta\leq\frac{\pi}{2}$.\\ 
        \item The imaginary part of $h$ satisfies
        \begin{equation*}
            \im h(z)=\sum_{j=1}^d b_j\im z_j+\frac{1}{\pi^d}\int_{\Rn}\mathcal{P}_d(z,t)\,d\mu(t), \quad z\in\uppern.
        \end{equation*}
    \end{enumerate}
\end{corollary}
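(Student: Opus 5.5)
Both statements will be derived directly from the integral representation \eqref{eq: integral representation}. The constants $a$ and $b$ are handled by hand. The measure term is handled by dominated convergence, using the growth condition \eqref{eq: growth condition Nevanlinna measure}.

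\emph{Part (ii).} I take imaginary parts in \eqref{eq: integral representation}. Since $a\in\R$ and $b\in[0,\infty)^d$, the linear part contributes exactly $\sum_j b_j\im z_j$. For the kernel, one shows that $\im K_d(z,t)$ equals the Poisson kernel $\mathcal{P}_d(z,t)=\prod_{j}\frac{\im z_j}{|t_j-z_j|^2}$. To check this I write each factor as
\[
\frac{1}{t_j-z_j}-\frac{1}{t_j+i}=\frac{t_j\bar z_j\text{-type numerator}}{(t_j-z_j)(t_j+i)}.
\]
Note that $\im\frac{1}{t_j-z_j}=\frac{\im z_j}{|t_j-z_j|^2}$. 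The correction terms with $\frac{1}{t_j+i}$, together with the constant $-i\prod_j\frac1{t_j^2+1}$, have to cancel in the imaginary part. For $d=1$ this is immediate. For general $d$ the cleaner route is to expand the product, so the identity follows only after integrating against $\mu$. The Nevanlinna condition kills the mixed terms: all terms with at least two factors $\frac{1}{t_j-z_j}$ and $\overline{\cdots}$ of "wrong type" integrate to zero. This is the standard computation of \cite{Luger_2019}, and I would cite it as Theorem 4.1/Corollary 4.6 there rather than redo it. Since the statement is quoted verbatim from \cite[Corollary 4.6]{Luger_2019}, the honest proof is that reference. Here I only sketch how it is derived.

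\emph{Part (i).} I fix $j$ and multiply \eqref{eq: integral representation} by $(\tau-z_j)$. The term $(\tau-z_j)(a+\sum_k b_kz_k)$ tends to $0$. In the kernel, the factor $\frac{1}{t_j-z_j}$ times $(\tau-z_j)$ is bounded by a constant depending only on the Stolz angle $\theta$, because $|\tau-z_j|\le C_\theta|t_j-z_j|$ for $z_j$ in the Stolz domain. It converges pointwise to $\mathbf 1_{\{t_j=\tau\}}$. All other factors are dominated by multiples of $\prod_k(1+t_k^2)^{-1}$ times terms that are locally uniform in $z_k$, $k\ne j$. By dominated convergence the limit is
\[
\frac{1}{\pi^d(2i)^{d-1}}\int_{\{t_j=\tau\}}\prod_{k\neq j}\left(\frac1{t_k-z_k}-\frac1{t_k+i}\right)d\mu(t).
\]
It remains to show that this is independent of $z_k$ for $k\ne j$, and nonnegative. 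Independence follows from the Nevanlinna condition: it forces the restriction $\mu|_{\{t_j=\tau\}}$ to be (a point mass in $t_j$ times) Lebesgue measure in the remaining variables, up to a constant. This is the step I expect to be the main obstacle. I would handle it by differentiating the limit in $z_k$. That yields $\int\frac{1}{(t_k-z_k)^2}\cdots$, which vanishes by the Nevanlinna condition after a limiting argument in $z_j\to\tau$. The resulting constant integral then evaluates to $\pi^{d-1}(2i)^{d-1}\cdot c/\pi^{d}$-type normalization times the mass density. That quantity is $\ge0$, and this gives $c_{z_j}(\tau)\ge0$.
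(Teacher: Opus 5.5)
The paper gives no proof of this statement. It imports it verbatim from \cite[Corollary 4.6]{Luger_2019}, so your fallback of citing that result is exactly the paper's route and is adequate. If your sketch is meant to stand on its own, though, two steps in (i) do not go through as written.

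The first is the majorant. You bound $|\tau-z_j|/|t_j-z_j|$ by $C_\theta$ and the factors with $k\neq j$ by $(1+t_k^2)^{-1}$. That leaves no decay in $t_j$, and the piece $(\tau-z_j)/(t_j+i)$ decays only like $|t_j|^{-1}$. A majorant of this form is not $\mu$-integrable in general: take $\mu=\lambda_d$, the measure of $h\equiv i$. You must keep the $j$-th factor together as $\frac{(\tau-z_j)(z_j+i)}{(t_j-z_j)(t_j+i)}$. Then use $|\tau-z_j|/|t_j-z_j|\lesssim(1+|t_j|)^{-1}$, which holds uniformly for $z_j$ in the Stolz angle with $|z_j-\tau|\le 1$ (split into $|t_j-\tau|\ge 2$ and $|t_j-\tau|<2$). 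The same estimate is needed when you let $z_j\to\tau$ in $(\tau-z_j)^2$ times the Nevanlinna condition.

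The second is independence of the other variables when $d\ge 3$. For $d=2$ your differentiation idea works as stated: passing to the limit in the Nevanlinna condition (or its complex conjugate) gives exactly $\int_{\{t_j=\tau\}}(t_k-z_k)^{-2}\,d\mu=0$. For $d\geq 3$, the same limit only yields $\int_{\{t_j=\tau\}}(t_k-z_k)^{-2}\prod_{l\neq j,k}\frac{2\im z_l}{|t_l-z_l|^2}\,d\mu=0$, with real Poisson factors. The $z_k$-derivative of your limit instead carries the holomorphic factors $X_l(z_l):=\frac{1}{t_l-z_l}-\frac{1}{t_l+i}$. So ``vanishes by the Nevanlinna condition'' is not yet justified.

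One way to close this:
write $\frac{2\im z_l}{|t_l-z_l|^2}=-i\big(X_l(z_l)-X_l(\overline{z_l})\big)$. The integral $G(\zeta)$ obtained by putting $X_l(\zeta)$ in place of that Poisson factor is holomorphic on $\C\setminus\R$. It satisfies $G(z)=G(\overline{z})$ on $\upper$, so it is constant, and since $X_l(-i)=0$ it vanishes. Iterating over $l$ removes all Poisson factors. Alternatively, prove the product structure $\mu|_{\{t_j=\tau\}}=c\,\delta_\tau\otimes\lambda_{d-1}$, which you assert but do not derive.

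Once independence is known, nonnegativity needs no mass density. At $z_k=i$ for all $k\neq j$ the limit equals $\pi^{-d}\int_{\{t_j=\tau\}}\prod_{k\neq j}(1+t_k^2)^{-1}\,d\mu\ge 0$.

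In (ii), note that the mixed terms vanish only for factors normalized at $i$. For $d=2$ one has $\im K_2=\mathcal{P}_2-\tfrac12\re\big(Y_1\overline{Y_2}\big)$ with $Y_j=\frac{1}{t_j-z_j}-\frac{1}{t_j-i}$. The identity $\int Y_1\overline{Y_2}\,d\mu=0$ follows by integrating the squared Nevanlinna condition in $z_1$ and $\overline{z_2}$ and using $Y_j(i)=0$. With the kernel's own factors it fails: $\int X_1\overline{X_2}\,d\lambda_2=4\pi^2$.
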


Here the function $\Pn$ is the Poisson kernel for the poly-upper half-plane. It is the product of $d$ copies of the one-dimensional Poisson kernel, and hence
\begin{equation*}
    \Pn(z,t)= \prod_{j=1}^d\frac{\im z_j}{|t_j-z_j|^2}, \quad (z,t)\in\uppern\times\Rn.
\end{equation*}

We finish by recalling  a basic definition for the next section. 

\begin{definition}
    A Nevanlinna measure $\mu$ is \emph{extremal} if it cannot be written in a non-trivial way as a sum of two Nevanlinna measures, that is, if  $\nu_1$ and $\nu_2$ are Nevanlinna measures such that $\nu_1+\nu_2=\mu$, then $\nu_1$ and $\nu_2$ are constant multiples of $\mu$.
\end{definition}
Equivalently, a Nevanlinna measure is extremal if and only if  every decomposition $h=h_1+h_2$ of the corresponding \HNf{} $h$ with zero polynomial part  is of the form $ h_1=c+dh$ and $h_2=-c+(1-d)h$ for some $c\in\R$ and $0\leq d\leq 1$. In this case we also call the corresponding Herglotz-Nevanlinna function \emph{extremal}.

\subsection{An extremality criterion}\label{section: extremality criterion}

In this section we consider the case of two variables only. Let us first view the Nevanlinna condition in a   different way. To this end let $\mu$ be a Nevanlinna measure, which then by assumption satisfies the growth condition \eqref{eq: growth condition Nevanlinna measure}. Hence for every essentially bounded $\mu$-measurable function $f:\Rtwo\to\R$ the integral 
\begin{equation*}
    (\mathcal{T}_\mu f)(z,w):=  \int_{\Rtwo}\frac{f(u,v)}{(u-z)^2(v-\overline{w})^2}\,d\mu(u,v),  \quad (z,w)\in\uppertwo
\end{equation*}
is well-defined and $\mathcal{T}_\mu f$ is a holomorphic function on $\mathbb H^2$. Hence we can introduce the operator
\begin{equation*}
   \mathcal{T}_\mu: L^\infty(\Rtwo,\mathbb R;\mu)\to \mathcal{ O}(\mathbb H^2).
\end{equation*}
Using this notation the Nevanlinna condition \eqref{eq: nevanlinna condition two variables} reads as 
\begin{equation*}
    \mathcal{T}_\mu 1 = 0,
\end{equation*}
i.e., the kernel of $\mathcal{T}_\mu$ contains the constant function. The following theorem rephrases extremality in terms of $\mathcal{T}_\mu$. Let $\mathbb{R}\mathbf{1}$ denote the set of constant real-valued functions.

\begin{theorem}\label{thm: extremal condition}
    Let $\mu$ be a Nevanlinna measure. Then $\mu$ is extremal if and only if $\ker\mathcal{T}_\mu =\mathbb{R}\mathbf{1}$.
\end{theorem}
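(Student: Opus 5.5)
The plan is to identify decompositions of $\mu$ with bounded densities in $\ker\mathcal{T}_\mu$, using the Radon--Nikodym theorem in one direction and a small perturbation in the other. The key observation is that the Nevanlinna condition \eqref{eq: nevanlinna condition two variables} is linear in the measure. For a $\mu$-measurable $f$ with $0\le f\le 1$ $\mu$-a.e., the measure $f\mu$ is positive and satisfies the growth condition \eqref{eq: growth condition Nevanlinna measure} because $f\mu\le\mu$. Moreover
\begin{equation*}
    \int_{\Rtwo}\frac{1}{(u-z)^2(v-\overline{w})^2}\,d(f\mu)(u,v)=(\mathcal{T}_\mu f)(z,w).
\end{equation*}
Hence $f\mu$ is a Nevanlinna measure if and only if $f\in\ker\mathcal{T}_\mu$.

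\emph{Step 1: extremality implies $\ker\mathcal{T}_\mu=\mathbb{R}\mathbf{1}$.} The inclusion $\mathbb{R}\mathbf{1}\subseteq\ker\mathcal{T}_\mu$ is exactly the Nevanlinna condition. Conversely, take $f\in\ker\mathcal{T}_\mu$ real and essentially bounded, say $\|f\|_\infty\le M$, and assume $f$ is not $\mu$-a.e.\ constant; the case $\mu=0$ is trivial. Set
\begin{equation*}
    g=\tfrac12+\tfrac{f}{2M+1}, \qquad\text{so that } 0<g<1 \text{ and } g\in\ker\mathcal{T}_\mu
\end{equation*}
by linearity and $\mathcal{T}_\mu 1=0$. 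Then $\nu_1=g\mu$ and $\nu_2=(1-g)\mu$ are Nevanlinna measures with $\nu_1+\nu_2=\mu$. Extremality gives $\nu_1=c\mu$, so $g=c$ $\mu$-a.e. Then $f$ is a.e.\ constant, a contradiction. Thus $\ker\mathcal{T}_\mu=\mathbb{R}\mathbf{1}$, where functions are understood modulo $\mu$-null sets, as they are in $L^\infty(\mu)$.

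\emph{Step 2: $\ker\mathcal{T}_\mu=\mathbb{R}\mathbf{1}$ implies extremality.} Suppose $\mu=\nu_1+\nu_2$ with $\nu_1,\nu_2$ Nevanlinna measures. Then $\nu_1\le\mu$, so $\nu_1\ll\mu$. Since $\mu$ is $\sigma$-finite by the growth condition, the Radon--Nikodym theorem gives $\nu_1=f\mu$ with $0\le f\le1$ $\mu$-a.e. In particular $f\in L^\infty(\mu)$. Because $\nu_1$ satisfies \eqref{eq: nevanlinna condition two variables}, the identity above gives $\mathcal{T}_\mu f=0$, so $f=c$ is constant. Hence $\nu_1=c\mu$ and $\nu_2=(1-c)\mu$, which is extremality.

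The only delicate point I expect is the Radon--Nikodym step. One must check that $\sigma$-finiteness holds and that the bound $0\le f\le 1$ comes out. Both follow from $\nu_1\le\mu$: the set where $f>1$ has $\nu_1$-measure at most its $\mu$-measure, forcing it to be null. The integral identity for $\nu_1=f\mu$ is then justified by integrability against the kernel. Indeed $|(u-z)^{-2}(v-\overline{w})^{-2}|\le C_{z,w}(1+u^2)^{-1}(1+v^2)^{-1}$, and this is $\mu$-integrable by \eqref{eq: growth condition Nevanlinna measure}.
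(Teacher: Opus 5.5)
Your proof is correct and takes essentially the paper's route: Radon--Nikodym with $0\le f\le 1$ gives the direction from $\ker\mathcal{T}_\mu=\mathbb{R}\mathbf{1}$ to extremality, and perturbing $\mu$ by a bounded nonconstant element of the kernel gives the converse. Your choice of $g\mu$ and $(1-g)\mu$ with $g=\tfrac12+f/(2M+1)$ sums exactly to $\mu$, which slightly tidies the paper's $(1\pm\varepsilon f)\mu$, whose sum is $2\mu$.
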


\begin{proof}
    Suppose $\ker\mathcal{T}_\mu =\mathbb{R}\mathbf{1}$. Write $\mu$ as the sum of two Nevanlinna measures, and let $\nu$ denote any of the summands. Then obviously $\nu$ has to be absolutely continuous with respect to $\mu$ and satisfy $\nu\leq\mu$. By the Radon-Nikodym theorem, there is some $\mu$-measurable $f$ such that $d\nu=fd\mu$. In this case we have $0\leq f\leq 1$ $\mu$-a.e, and hence $f\in L^\infty(\Rtwo,\mathbb R;\mu)$. The Nevanlinna condition implies that $\mathcal{T}_\mu f = \mathcal{T}_\nu 1= 0$. Since $\ker\mathcal{T}_\mu =\mathbb{R}1$, it follows that $f$ is constant. That is, $\nu$ is of the form $c\mu$ for some constant $0\leq c \leq 1$, which proves extremality of $\mu$. 
   
     For the other direction, we show the contrapositive, namely that if $\ker\mathcal{T}_\mu \supsetneq \mathbb{R}1$ then $\mu$ is not extremal. Therefore suppose there is some nonconstant $f\in L^\infty(\Rtwo,\R;\mu)$ such that $\mathcal{T}_\mu f=0$. Since $f$ is essentially bounded, we can choose $0<\varepsilon\leq\|f\|_\infty^{-1}$ such that $1\pm \varepsilon f \geq 0$. In this way, we obtain two positive Borel measures $\nu_\pm $ according to $d\nu_\pm:= (1\pm \varepsilon f) d\mu$. The growth condition is satisfied because $1\pm\varepsilon f$ is bounded, and the Nevanlinna condition is also satisfied:
    \begin{equation*}
        \mathcal{T}_{\pm\nu} 1 = \mathcal{T}_\mu(1\pm\varepsilon f) = \mathcal{T}_\mu 1\pm \varepsilon \mathcal{T}_\mu(f) = 0.
    \end{equation*}
    Hence $\nu_\pm$ are Nevanlinna measures, and they sum to $2\mu$. Since neither measure is a scalar multiple of $\mu$, it follows that $\mu$ is not extremal.
\end{proof}

Note that Theorem \ref{thm: extremal condition} can in a  straightforward way also be generalized to the case of more than $2$ variables, however, then several operators have to be considered and we refrain from this here.

\subsection{Real rational Herglotz-Nevanlinna functions}

A rational function in $d$ variables is called \emph{real rational} if it attains only real values on $\Rn$, whenever the denominator does not vanish. In the rest of this  paper we will consider Herglotz-Nevanlinna functions which are real rational.

\subsubsection{In one variable}\label{section: onevar_prelims}
Real rational Herglotz-Nevanlinna functions in one variable are very well studied. We only want to recall  briefly some facts that will be important for us.

Let $h=\frac{p}{q}$ be a one-variable real rational Herglotz-Nevanlinna function with representing data $(a,b,\mu)$ such that $\deg q = n$. It only has simple poles and these lie on the real line,  and $\mu$ is discrete with finitely many point masses at these poles.  Specifically, 
\begin{equation}\label{eq: one variable real-rational func}
    h(z) = a + bz + \sum_{j=1}^{n}c_j\bigg(\frac{1}{\tau_j-z}-\frac{\tau_j}{\tau_j^2+1}\bigg),
\end{equation}
where each $c_j$ is a positive constant, the numbers $\tau_j$ are the zeros of the denominator $q$, and the measure $\mu = \pi\sum_{j=1}^n c_j\delta_{\tau_j}$. Notice that these $c_j$ agree with those in the one-variable version of Corollary~\ref{cor: main cor}(i). This yields
\begin{equation*}
    c_j = \lim\limits_{z\hat \to \tau_j}\frac{(\tau_j-z)p(z)}{q(z)} =-\frac{p(\tau_j)}{q'(\tau_j)}>0, \quad j=1,\dots,n.
\end{equation*}

\subsubsection{In several variables}

Let $h=p/q$ be a real rational Herglotz-Nevanlinna function and assume  that the numerator and denominator are coprime. We start by recalling some standard notations. The \emph{polydegree} of a polynomial  of $d$ variables is the $d$-tuple $(n_1,\dots \,n_d)$, where  $n_j$ is the degree of  the polynomial in the variable $z_j$. Similarly,  if $h=p/q$ depends on $d$ variables then it has \emph{polydegree}  $(n_1,\dots \,n_d)$ if  $n_j$  is the maximum of the degrees of $p$ and $q$ in the variable $z_j$. In the present paper, we are primarily interested in polynomials and rational functions of two variables, and hence the preferred terminology is \emph{bidegree}. The coordinates we use on $\uppertwo$ are $(z,w)$, as opposed to  $(z_1,\dots,z_d)\in\uppern$ when $d$ is unspecified. When dependence on all specified variables is needed to avoid trivial complications, we state this explicitly, either directly or by requiring the corresponding entries of the polydegree to be positive.

If the polynomial part in  the representation \eqref{eq: integral representation} of $h$ is constant, then \cite[Corollary 4.6(iv)]{Luger_2019} implies that the degree of $q$ is greater than or equal to that of $p$ in each variable. Consequently, the polydegree of $h$ agrees with the polydegree of $q$. The converse also holds; if the polydegree of  $h$ agrees with that of $q$, then its polynomial part is constant.

The following basic fact about real rational Herglotz-Nevanlinna functions will be important  for us and thus we give a proof for completeness. Here and throughout the paper, a property that holds almost everywhere (a.e) does so with respect to the Lebesgue measure, unless otherwise stated. 
We denote the Lebesgue measure by $\lambda$.

\begin{lemma}
    A rational \HNf{} $h=p/q$  is real rational precisely when the coefficients of $p$ and $q$ are real, up to a common nonzero complex constant factor.
\end{lemma}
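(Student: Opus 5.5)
One direction is immediate. If $p=c\tilde p$ and $q=c\tilde q$ with $\tilde p,\tilde q\in\R[z_1,\dots,z_d]$ and $c\in\C\setminus\{0\}$, then $h=\tilde p/\tilde q$. Hence $h$ takes real values on $\Rn$ wherever $\tilde q\neq 0$, so $h$ is real rational.

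For the converse, assume $h=p/q$ is real rational. Write $p=p_1+ip_2$ and $q=q_1+iq_2$ with $p_j,q_j$ real polynomials. We may assume $p/q$ is in lowest terms, since the statement concerns the reduced representation up to a common factor. On the dense open set $\{x\in\Rn:\,q(x)\neq0\}$ we have $h(x)=\overline{h(x)}$. This gives
\begin{equation*}
    p(x)\overline{q}(x)-\overline{p}(x)q(x)=0,
\end{equation*}
where $\overline{p}$ denotes the polynomial with conjugated coefficients. The left side is a polynomial that vanishes on a nonempty open subset of $\Rn$, so it is the zero polynomial, i.e.\ $p\,\overline{q}=\overline{p}\,q$ in $\C[z_1,\dots,z_d]$.

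Now use unique factorization in $\C[z_1,\dots,z_d]$. Since $\gcd(p,q)=1$, the identity $p\overline q=\overline p q$ shows that $q$ divides $p\overline q$, hence $q\mid\overline q$. Conjugation preserves degrees, so $\overline q=\kappa q$ for some constant $\kappa\in\C\setminus\{0\}$. Substituting back gives $\overline p=\kappa p$; if $p=0$ then $h\equiv0$ and the claim is trivial. Applying conjugation twice shows $|\kappa|=1$, so we can write $\kappa=e^{-2i\theta}$. Put $c=e^{-i\theta}$. Then $\overline{c^{-1}q}=\bar c^{-1}\overline q=e^{-i\theta}\kappa q=e^{-i\theta}e^{-2i\theta}e^{i\theta}\cdot e^{i\theta}q$. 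More directly, $\overline{e^{i\theta}q}=e^{-i\theta}\kappa q=e^{-i\theta}e^{-2i\theta}q$; we must choose the sign of $\theta$ so that $e^{i\theta}q$ is real. The choice that works is $\kappa=e^{2i\theta}$, which gives $\overline{e^{i\theta}q}=e^{-i\theta}e^{2i\theta}q=e^{i\theta}q$. So $e^{i\theta}q$ and $e^{i\theta}p$ both have real coefficients, and the common factor is $c=e^{-i\theta}$.

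The only delicate point is the coprimality bookkeeping, together with the fact that $q$ does not vanish identically on $\Rn$, so its nonvanishing set is a dense open set. Holomorphy and positivity play no real role in this argument. If the lemma were meant for non-reduced representations, it would fail, as $q=p=(z_1-i)$ shows. I therefore read "$h=p/q$" as the coprime representation fixed just before the lemma.
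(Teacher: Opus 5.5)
Your proof is correct and follows essentially the same route as the paper: both derive the polynomial identity $p\overline{q}=\overline{p}q$ from realness on $\mathbb{R}^d$, use coprimality to get $\overline{q}=\kappa q$ and $\overline{p}=\kappa p$, and then extract the common factor. The paper does this last step by noting that all ratios of nonzero coefficients are real, while you take a square root of the unimodular $\kappa$; in the write-up, replace the self-correcting passage about the sign of $\theta$ with just the final computation $\overline{e^{i\theta}q}=e^{-i\theta}\kappa q=e^{i\theta}q$ for $\kappa=e^{2i\theta}$.
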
 
\begin{proof}
    Suppose $h$ is real rational and define $p^*(\cdot):=\overline{p(\overline{\cdot})}$ and $q^*(\cdot):=\overline{q(\overline{\cdot})}$. On $\Rn$ we have $p/q=p^*/q^*$ a.e, and therefore
    \begin{equation}\label{eq: real coefficients}
        pq^*=qp^*
    \end{equation}
    on $\Rn$. Since $p$ and $q$ are coprime, it follows that $p\,|\,p^*$ and $q\,|\,q^*$, and hence $p^*=\alpha_1 p$ and $q^*=\alpha_2 q$ for some $\alpha_1,\alpha_2\in\C$. Substituting this back into \eqref{eq: real coefficients} yields $\alpha_1=\alpha_2=:\alpha$. For any two (nonzero) coefficients $c,d$ in $p$ or $q$, we thus have $\overline{c}=\alpha c$ and $\overline{d}=\alpha d$, which yields $\overline{c/d}=c/d$. In particular, $c/d$ is real. It follows that the coefficients of $p$ and $q$ are real up to a common nonzero complex constant factor. Conversely, it is clear that $h$ has real boundary values a.e when the coefficients are real, so the proof concludes.
\end{proof}

Factors of the form  $\tau-z$ in the denominator play  a special role. For later use we recall the following result from \cite[Corollary 3.5]{Luger_2020_geometry_nevanlinna_measure}. 
\begin{lemma}\label{lemma: decomposition}
    Let $h$ be a real rational Herglotz-Nevanlinna function of two variables. Suppose its denominator is divisible by $\tau-z$ for some $\tau\in\R$. Then $h$ can be decomposed as
    \begin{equation}\label{eq: decomposition}
        h(z,w)=\frac{c_z(\tau)}{\tau-z} + \Tilde{h}(z,w),
    \end{equation}
    where $c_z(\tau)>0$ is from Corollary~\ref{cor: main cor}(i) and $\Tilde{h}$ is a real rational Herglotz-Nevanlinna function whose denominator is coprime with $\tau-z$.
\end{lemma}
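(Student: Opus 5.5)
We plan to split off the polar part in $z$ at $\tau$ and then show the remainder is again a real rational \HNf{}. Write $h=p/q$ with $p,q$ coprime with real coefficients, using the preceding lemma. Write $q(z,w)=(\tau-z)^k r(z,w)$ with $r(\tau,\cdot)\not\equiv 0$.

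\emph{Step 1: the pole is simple and gives a constant residue.} By Corollary~\ref{cor: main cor}(i), for every fixed $w\in\upper$ the limit $\lim_{z\tohat\tau}(\tau-z)h(z,w)=c_z(\tau)$ exists, is finite, nonnegative and independent of $w$. Since $h(\cdot,w)$ is rational in $z$, the nontangential limit coincides with the genuine limit of the rational function. Finiteness forces $k=1$: if $k\geq 2$, then finiteness for all $w$ in an open set gives $p(\tau,w)=0$ there. Hence $\tau-z$ divides $p$, contradicting coprimality. So $q=(\tau-z)r$ with $r(\tau,\cdot)\not\equiv0$, and $c_z(\tau)=p(\tau,w)/r(\tau,w)$ identically in $w$. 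Moreover $c_z(\tau)\neq 0$, since otherwise $p(\tau,\cdot)\equiv 0$, again contradicting coprimality. Thus $c_z(\tau)>0$.

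\emph{Step 2: define the remainder.} Put $\tilde h:=h-\frac{c_z(\tau)}{\tau-z}=\frac{p-c_z(\tau)r}{(\tau-z)r}$. The numerator vanishes on $\{z=\tau\}$, so $\tau-z$ divides $p-c_z(\tau)r$ and the factor cancels. After reducing, $\tilde h=\tilde p/\tilde q$ with $\tilde q\mid r$ and real coefficients. It remains to check that $\tau-z$ does not divide the reduced denominator. If it did, we would have $(\tau-z)\mid r$, and $\tilde h$ would again have a pole along $z=\tau$. Then one iterates: each step removes one factor $(\tau-z)$ from the denominator, so after finitely many steps we reach a remainder whose denominator is coprime with $\tau-z$. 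All extracted terms $c\,/(\tau-z)$ sum to a single such term, whose constant equals the limit from Corollary~\ref{cor: main cor}(i) applied to $h$, because the remainder contributes zero to that limit.

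\emph{Step 3: the Herglotz-Nevanlinna property of $\tilde h$.} This is the main obstacle, since subtracting a Herglotz-Nevanlinna function need not preserve $\im\geq0$. We use the integral representation. Write $\mu$ for the measure of $h$ and decompose $\mu=\mu|_{\{\tau\}\times\R}+\mu|_{\R^2\setminus(\{\tau\}\times\R)}$. By \cite{Luger_2020_geometry_nevanlinna_measure}, restrictions to coordinate-orthogonal lines are again Nevanlinna measures. The restricted piece is of the form $\delta_\tau\times\nu$, and the Nevanlinna condition forces $\nu$ to be a multiple of Lebesgue measure. Indeed, $\int (v-\bar w)^{-2}d\nu(v)=0$ for all $w\in\upper$ is the condition that the one-variable \HNf{} of $\nu$ has no dependence beyond its polynomial part. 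This piece is exactly the measure $\pi c\,\delta_\tau\times\lambda$ of $c/(\tau-z)$. Comparing the limit in Corollary~\ref{cor: main cor}(i) identifies $c=c_z(\tau)$, since the complementary measure gives zero mass at $\{\tau\}\times\R$ and hence contributes no limit.

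Consequently the complementary Nevanlinna measure, together with the same $a,b$ (up to a real constant from the kernel normalisation), represents $\tilde h$. So $\tilde h$ is a \HNf{}, and it is real rational by Step 2. As a backup for Step 3, one can argue directly: on $\upper^2$, $\im\tilde h=\im h-c_z(\tau)\im\frac{1}{\tau-z}$. Letting $\im w\to\infty$ and using Corollary~\ref{cor: main cor}(ii), the Poisson integral of the line part is exactly the subtracted term, so the remainder is a Poisson integral of a positive measure and is nonnegative.
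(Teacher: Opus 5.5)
Your proposal is correct and takes essentially the route behind the statement, which the paper does not prove itself but cites from \cite[Corollary 3.5]{Luger_2020_geometry_nevanlinna_measure}: your Step 3 (the restriction of $\mu$ to $\{\tau\}\times\R$ is a Nevanlinna measure, hence equals $\pi c_z(\tau)\,\delta_\tau\times\lambda$, and the remaining measure represents $\tilde h$ up to a real constant) is exactly that source's hyperplane-restriction mechanism, and your Steps 1--2 correctly supply the rational bookkeeping ($k=1$, $p(\tau,\cdot)=c_z(\tau)\,r(\tau,\cdot)$, $c_z(\tau)>0$, cancellation of $\tau-z$). Two small cleanups: first, the iteration in Step 2 is vacuous, because $k$ was chosen maximal, so $\tau-z$ does not divide $r$ and the reduced denominator (a divisor of $r$) is automatically coprime with $\tau-z$; second, instead of the vague backup argument you can obtain $\im\tilde h\geq 0$ directly and without the restriction theorem, by fixing $w\in\upper$ and removing the point mass $c_z(\tau)$ at $\tau$ from the one-variable Nevanlinna representation of $z\mapsto h(z,w)$, which works precisely because the limit in Corollary~\ref{cor: main cor}(i) is independent of $w$.
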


In other words if the denominator contains a linear factor depending on one variable only, then a kind of partial fraction decomposition into Herglotz-Nevanlinna functions is possible. As we will see in Subsection \ref{section: extremal_example},  this is a rather special situation.

\begin{remark}
    If the denominator of $h$ in Lemma~\ref{lemma: decomposition} contains several distinct factors of the form $\tau-z$, then the decomposition method may be iterated until the denominator of the remaining \HNf{} is coprime with all of them.
\end{remark}

In the following, three important properties of real rational Herglotz-Nevanlinna functions are established. By Lemma~\ref{lemma: decomposition} we can separate the denominator from factors of the form $\tau-z$, and hence we only consider denominators that are coprime with all such factors. However, this separation may leave the remainder $\tilde{h}$ with a nonzero polynomial part, which is why we formulate the following under the assumption that the polynomial part is constant, rather than zero. 
\begin{lemma}\label{lemma: properties of RR functions}
    Let $h=p/q$ be a  real rational Herglotz-Nevanlinna function  of two variables. Suppose its polynomial part  is constant and that $q$ does not contain a factor of the form $\tau-z$ for any $\tau\in\R$. Then the following statements hold.
    \begin{enumerate}[label=(\roman*)]
        \item For $\tau\in\R$, the sliced function $w\mapsto h(\tau,w)$ is a real rational Herglotz-Nevanlinna function of one variable.
        \item The denominator $q$ does not contain factors which  depend on $z$ only.
        \item Let $n$ be the degree of $q$ in the variable $w$. Then there is a finite set $S\subset\R$ such that for $\tau\in\R\setminus S$ the polynomial $w\mapsto q(\tau,w)$ has $n$ distinct real zeros. Moreover, for such $\tau$, the denominator of the sliced function $h(\tau,\cdot)$ is precisely $q(\tau,\cdot)$.
    \end{enumerate} 
\end{lemma}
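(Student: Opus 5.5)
For (i) I will fix $\tau\in\R$ and approach it from the upper half-plane. Consider $h_\varepsilon(w):=h(\tau+i\varepsilon,w)$ for $\varepsilon\downarrow 0$. Each $h_\varepsilon$ is a one-variable \HNf{}. Since $q$ has no factor $\tau-z$, the polynomial $q(\tau,\cdot)$ is not identically zero. Hence $h(\tau,w)=p(\tau,w)/q(\tau,w)$ is a well-defined rational function of $w$, and $h_\varepsilon\to h(\tau,\cdot)$ locally uniformly on $\upper$ away from the finitely many zeros of $q(\tau,\cdot)$ in $\upper$.

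I then show that $q(\tau,\cdot)$ has no zeros in $\upper$. The point is that $h(\tau,\cdot)$ is a pointwise limit of functions with nonnegative imaginary part. If it had a pole $w_0\in\upper$, then near $w_0$ it would attain values with negative imaginary part. By Hurwitz-type reasoning, or simply by continuity of $(z,w)\mapsto p/q$ near $(\tau,w_1)$ with $w_1$ close to $w_0$ and $q(\tau,w_1)\neq0$, this would contradict $\im h\ge 0$ on $\uppertwo$. One could also argue that after cancellation of common factors the limit is holomorphic with $\im\ge0$. Hence $h(\tau,\cdot)$, reduced, is a one-variable \HNf{}. It is real rational because $p,q$ have real coefficients by the preceding lemma, so it is real on $\R$.

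For (ii), suppose $r(z)$ is a nonconstant factor of $q$. Since the coefficients are real, I will take $r$ irreducible over $\R$. A zero of $r$ in $\upper$ is impossible, since then $h$ would have a singular set $\{z_0\}\times\upper$ inside $\uppertwo$, while $h$ is holomorphic there. A zero in the lower half-plane gives a conjugate zero in $\upper$, which is likewise impossible. So $r$ has a real zero $\tau$, and $\tau-z$ divides $q$, contradicting the hypothesis.

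For (iii), write $q(z,w)=\sum_{k=0}^n q_k(z)w^k$ with $q_n\not\equiv0$. Let $S$ consist of the real zeros of $q_n$ together with the real zeros of the discriminant of $q$ with respect to $w$. The discriminant is not identically zero because $q$ is squarefree in $w$: a repeated factor would give non-simple poles of the slice for generic $\tau$, contradicting (i) and the simple-pole property of one-variable real rational functions. One should also use (ii) to exclude $z$-only factors. For $\tau\notin S$, $q(\tau,\cdot)$ has degree $n$ with distinct roots.

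Reality of the roots is the main obstacle. By the argument in (i), the roots are not in $\upper$, and since the coefficients are real they are not in the lower half-plane either. So all roots are real and distinct.

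Finally, I must show that $p(\tau,\cdot)$ and $q(\tau,\cdot)$ share no root for $\tau$ outside a further finite set. Since $p$ and $q$ are coprime, their resultant in $w$ is a nonzero polynomial in $z$. I add its real zeros to $S$; away from these there is no cancellation, so the denominator of the slice is exactly $q(\tau,\cdot)$. Here the hypothesis that the polynomial part is constant is used: it gives $\deg_w p\le n$, which I need so that the degree bookkeeping for the resultant is valid, and which also ensures the slice has no extra polynomial behaviour.
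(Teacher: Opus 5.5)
Your proof is correct and has the same skeleton as the paper's. The steps are: approach $\tau$ from $\mathbb{H}$ for (i); use the conjugate-pair argument for (ii); for (iii), discard the finitely many $\tau$ where the leading coefficient vanishes or the slice has cancellation, then use simple poles. Two steps are carried out differently, and two remarks need correcting.

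\textbf{Step (i).} The paper applies Hurwitz's theorem to $q(\tau+i\varepsilon,\cdot)\to q(\tau,\cdot)\not\equiv 0$. This shows that $q(\tau,\cdot)$ itself has no zeros in $\mathbb{H}$. Your continuity argument near a pole shows only that the \emph{reduced} slice has no poles in $\mathbb{H}$. It does not exclude a zero of $q(\tau,\cdot)$ in $\mathbb{H}$ that is cancelled by $p(\tau,\cdot)$, despite your sentence claiming that $q(\tau,\cdot)$ has no zeros in $\mathbb{H}$. This is harmless: it suffices for (i). In (iii), once the real zeros of the resultant are removed, every root of $q(\tau,\cdot)$ is a pole of the slice, so reality still follows. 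You should either invoke Hurwitz outright, as the paper does, or place the resultant step first.

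\textbf{Step (iii).} The paper takes $S=S_1\cup S_2$, where $S_2$ is the set of $\tau$ at which $p(\tau,\cdot)$ and $q(\tau,\cdot)$ share a factor. It gets finiteness of $S_2$ from the finiteness of common zeros of coprime polynomials, and reads off simplicity of the roots directly from the simple-pole property of the slice. Your resultant is a more explicit algebraic way to produce the same finite set, which is a legitimate alternative. The discriminant, however, is superfluous. Your own proof that it is not identically zero already uses generic non-cancellation plus simple poles. Those two facts give distinct roots directly for $\tau$ outside the zeros of $q_n$ and of the resultant. This also means the resultant must logically come before your discriminant step.

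\textbf{The constant polynomial part hypothesis.} It plays no role here. Specializing the resultant at $\tau$ requires only $q_n(\tau)\neq 0$, not $\deg_w p\le n$. The paper's proof of this lemma does not use that hypothesis either.
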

\begin{proof}
    (i) The slice $h(\tau,\cdot)$ is well-defined because $q(\tau,\cdot) \not\equiv 0$, and it is rational with real coefficients. Since $q(\tau+i\varepsilon,\cdot)\to q(\tau,\cdot)$ uniformly on compact subsets of $\upper$ as $\varepsilon\to 0^+$, and $q$ is nonzero on $\uppertwo$,  Hurwitz's theorem implies that $q(\tau,\cdot)$ is nonzero on $\upper$. In particular, $h(\tau,\cdot)$ is holomorphic on $\upper$. Nonnegativity of its imaginary part is carried over from that of $h$, by continuity.

    (ii) Assume that $r\in\C[z]$ is a nonconstant factor of $q$. Then the  zeros of $r$ have to be real, because otherwise either $h$ has a singularity in $\mathbb H^2$, or $p$ and $q$ are  not coprime.  This, however, contradicts the  assumption on real linear factors in $z$.
    
    (iii) Let $\tau\in\R$ and consider the specialization $q(\tau,\cdot)$, which in (i) was shown to be nonzero on $\upper$. Since the coefficients are real this implies that the zeros of $q(\tau,\cdot)$ are all real. Let $S_1\subset \R$ denote the set of points $\tau$ for which the leading coefficient of $q$, viewed as a polynomial in $w$, vanishes. Then for  $\tau\notin S_1$,  the polynomial $q(\tau,\cdot)$ has $n$ zeros counted with multiplicity. Next define $S_2\subset \R$  the set of points $\tau$ for which $p(\tau,\cdot)$ and $q(\tau,\cdot)$ have factors in common.  Then for $\tau\notin S_2$ the polynomial $q(\tau,\cdot)$ is the denominator of the reduced function $h(\tau,\cdot)$. As a real rational Herglotz-Nevanlinna function, $h(\tau,\cdot)$  has simple poles only, and hence the zeros of $q(\tau,\cdot)$ are simple for such $\tau$. Consequently, $q(\tau,\cdot)$ has $n$ distinct real zeros whenever $\tau\notin S_1\cup S_2=:S$. What is left to show is that the exceptional set $S$ is finite. The set $S_1$ is clearly finite. As for $S_2$, note that for every $\tau\in S_2$, there is some $w_\tau\in\R$ such that $p(\tau,w_\tau)=q(\tau,w_\tau)=0$, meaning that $(\tau,w_\tau)$ is a common zero of $p$ and $q$. But since $p$ and $q$ by assumption are coprime, they have finitely many zeros in common (see e.g. \cite{Elliptic_curves}), hence $S_2$ is finite as well.
    
    In the proof we have  also seen  that for $\tau\notin S$,  the denominator of  $h(\tau,\cdot)$ is $q(\tau,\cdot)$. 
\end{proof}

\subsection{The structure  formula in two variables}

In this section we   give an explicit formula for the representing measure  of a real rational \HNf{} in two variables. It is the Herglotz-Nevanlinna analogue of \cite[Theorems 3.3, 3.8]{clark_measures_two} for Clark measures associated with rational inner functions; however, we derive it within our framework. The proof (including the required lemmas) is rather technical, and hence we move the details to the appendix. However, it is direct, which avoids certain complications compared to \cite{clark_measures_two}. This is particularly apparent in the treatment of vertical components, which in the bidisc leads to the more involved exceptional-case   handled in \cite[Theorem 3.8]{clark_measures_two}. If $\phi=\tilde{p}/p$ is a rational inner function and $\alpha$ is an exceptional parameter value such that $\phi(\zeta,  \cdot)\equiv \alpha$ for some fixed $\zeta\in\mathbb{T}$, then the $\alpha$-level set contains the vertical line $z=\zeta$, and the support of the corresponding Clark measure contains the same line. In the Herglotz-Nevanlinna setting, the analogous vertical components arise from linear factors of the denominator, which can be separated directly by Lemma~\ref{lemma: decomposition}.

Ahead of the structure formula we give the following lemma. It is an analogue of \cite[Lemma 2.5]{clark_measures_two}, and provides a detailed description of the real zero set of the denominator of a real rational Herglotz-Nevanlinna function. As described in the paragraph leading up to Lemma~\ref{lemma: properties of RR functions}, we can separate the denominator from factors of the form $\tau-z$, and hence we only consider denominators that are coprime with all such factors. This is also why we formulate the following under the assumption that the polynomial part is constant, rather than zero.

\begin{lemma}\label{lemma: parametrization of zero set}
    Let $h=p/q$ be a real rational Herglotz-Nevanlinna function of bidegree $(m,n)$, $n\geq 1$, with constant polynomial part such that $q$ and $\tau-z$ are coprime for all $\tau\in\R$. Let $S\subset\R$ be the finite set from Lemma~\ref{lemma: properties of RR functions}(iii). Then there exist real-analytic functions $g_j: \R\setminus S\to\R$, $j=1,\dots,n$, such that $\Z(q)\cap\mathbb R^2$, the real zero set of $q$, is given by the closure of the union of their graphs:
    \begin{equation*}
         \Z(q)\cap\mathbb R^2 = \overline{\bigcup\limits_{j=1}^{n}\left\{(t,g_j(t)): t\in\R\setminus S\right\}}.
    \end{equation*}
\end{lemma}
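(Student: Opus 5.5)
We first construct the functions $g_j$, then show $\Z(q)\cap\R^2$ equals the closure of the union of their graphs by proving the two inclusions.

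\textbf{Construction of the $g_j$.} By Lemma~\ref{lemma: properties of RR functions}(iii), for every $\tau\in\R\setminus S$ the polynomial $q(\tau,\cdot)$ has $n$ distinct real zeros. Order them increasingly, $g_1(\tau)<\dots<g_n(\tau)$. Since each zero is simple, $\partial_w q(\tau,g_j(\tau))\neq 0$. The real-analytic implicit function theorem then gives the following: near each $\tau_0\in\R\setminus S$ there are $n$ real-analytic functions solving $q(t,g(t))=0$, and they pass through the $n$ distinct zeros at $\tau_0$. The leading coefficient does not vanish off $S\supseteq S_1$, so there are exactly $n$ zeros, and by continuity they stay distinct and keep their order. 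Hence each $g_j$ coincides locally with one of these branches and is real-analytic on $\R\setminus S$.

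\textbf{Inclusion ``$\supseteq$''.} Each graph lies in $\Z(q)\cap\R^2$ by construction, and $\Z(q)\cap\R^2$ is closed. Therefore the closure of the union of the graphs is also contained in $\Z(q)\cap\R^2$.

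\textbf{Inclusion ``$\subseteq$''.} Take $(t_0,w_0)\in\Z(q)\cap\R^2$. If $t_0\notin S$, then $w_0$ is one of the $g_j(t_0)$ and we are done. The main obstacle is the case $t_0\in S$: we must show that $(t_0,w_0)$ is a limit of points $(t,g_j(t))$ with $t\to t_0$, $t\notin S$. Here we use the hypotheses. By Lemma~\ref{lemma: properties of RR functions}(ii), $q$ has no factor depending on $z$ alone, so $q(t_0,\cdot)\not\equiv 0$.

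First consider the subcase where the leading coefficient $a_n(t_0)$ is nonzero. The zeros of $q(t,\cdot)$ depend continuously on $t$ as a multiset (Hurwitz's theorem, or continuity of roots). Hence $w_0$ is a limit of zeros $w(t)$ of $q(t,\cdot)$ as $t\to t_0$. For $t\notin S$ these zeros are exactly $g_1(t),\dots,g_n(t)$. Since there are only finitely many indices, some fixed $j$ and a sequence $t_k\to t_0$ give $g_j(t_k)\to w_0$.

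Now suppose $a_n(t_0)=0$. Then $q(t_0,\cdot)$ has degree $d<n$, but it is still not identically zero. Again by Hurwitz's theorem applied to $q(t,\cdot)\to q(t_0,\cdot)$ locally uniformly on $\C$, every zero $w_0$ of $q(t_0,\cdot)$ is a limit of zeros of $q(t,\cdot)$. The zeros that do not converge to finite zeros escape to infinity, and this does not affect the argument.

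In both subcases the approximating zeros are real whenever $t\notin S$, and they are among the $g_j(t)$. This gives $(t_0,w_0)$ in the closure and completes the second inclusion.
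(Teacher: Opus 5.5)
Your proof is correct. It agrees with the paper on the construction of the $g_j$ and on the inclusion $\supseteq$, but for the hard step, points $(t_0,w_0)\in\Z(q)\cap\R^2$ with $t_0\in S$, it takes a genuinely different and more elementary route.

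\textbf{The paper's route.} It translates $(\tau,\sigma)$ to the origin and invokes a version of \cite[Lemma 2.10]{lvl_curves_RIFs}. This gives a local parametrization of $\Z(q)$ near $(\tau,\sigma)$ by analytic graphs $w=\sigma+\Psi_k(z-\tau)$. It then uses that $q$ has no zeros in $\uppertwo\cup(-\upper)^2$ to show these branches are real on $\R$. Finally, it identifies one branch with some $g_j$ on a one-sided punctured neighbourhood of $\tau$.

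\textbf{Your route.} You stay in one variable throughout. Since $q(t_0,\cdot)\not\equiv 0$, which is exactly coprimality with $t_0-z$, Hurwitz's theorem (or Rouch\'e) applies to $q(t,\cdot)\to q(t_0,\cdot)$ locally uniformly on $\C$. It yields zeros of $q(t,\cdot)$ arbitrarily close to $w_0$ for all real $t$ near $t_0$. For $t\notin S$ these zeros are real and are among the $g_j(t)$ by Lemma~\ref{lemma: properties of RR functions}(iii), and pigeonhole then fixes an index.

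\textbf{Comparison.} Your argument avoids the external local-parametrization result and the separate realness argument for the branches. It also treats the degenerate case $a_n(t_0)=0$ at no extra cost; your first subcase is in fact subsumed by the second. The paper's approach buys more structure in return: each point over $S$ is approached by a single branch along a whole one-sided neighbourhood, and that branch continues real-analytically through $\tau$. Your argument gives convergence only along a sequence, but that is all the closure statement needs.
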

\begin{proof}
    See Appendix~\ref{app: A}.
\end{proof}

This parametrization of the real zero set leads to the   structure formula for the measure.

\begin{theorem}\label{thm: structure formula representing measures}
    Let $h=p/q$ be a bidegree $(m,n)$, $n\geq 1$, real rational Herglotz-Nevanlinna function and assume the data of $h$ are $(0,0,\mu)$.
    Decompose $h$ according to Lemma~\ref{lemma: decomposition} as
    \begin{equation*}
         h(z,w)=\Tilde{h}(z,w)+\sum_{k=1}^l  \frac{c_z(\tau_k)}{\tau_k-z}
    \end{equation*}
    where $l\geq 0$ is such that the denominator of $\tilde{h}$ is coprime with $\tau-z$ for all $\tau\in\R$. Write $\tilde{h}=\tilde{p}/\tilde{q}$. Let $g_1,\dots,g_n:\R\setminus S\to\R$ be the parametrizing functions of $\Z(\tilde{q})\cap\Rtwo$ from Lemma~\ref{lemma: parametrization of zero set} for $\Tilde h$ and denote by $\gamma_j$ the corresponding graphs in $\mathbb R^2$. For any $g_j$, define the \emph{weight} of $\mu$ associated with $g_j$ as
    \begin{equation}\label{eq: definition weight function}
        \alpha_j(t) := \bigg|\frac{\tilde{p}(t,g_j(t))}{\frac{\partial \tilde{q}}{\partial w}(t,g_j(t))}\bigg|.
    \end{equation}
    Then
    \begin{equation}\label{eq: the structure formula}
        \mu = \pi \sum_{j=1}^n (\gamma_j)_*(\alpha_j\lambda|_{\R\setminus S}) +  \pi\sum_{k=1}^lc_{z}(\tau_k)(\delta_{\tau_k}\times\lambda|_{\R\setminus S}).
    \end{equation}
\end{theorem}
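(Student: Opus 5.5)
The plan is to use uniqueness of the representing data together with the Poisson formula in Corollary~\ref{cor: main cor}(ii). By linearity it suffices to treat the two kinds of summands separately. For the vertical terms, each $c_z(\tau_k)/(\tau_k-z)$ is, as a function of $(z,w)$, a \HNf{} with measure $\pi c_z(\tau_k)\,\delta_{\tau_k}\times\lambda$. One sees this by computing its imaginary part, $c_z(\tau_k)\im z/|\tau_k-z|^2$, and noting that $\frac1\pi\int_\R \frac{\im w}{|v-w|^2}\,dv=1$. Since $S$ is finite, removing it from the $\lambda$ factor changes nothing. The data of $h$ are $(0,0,\mu)$, so after subtracting these terms, $\tilde h$ has constant polynomial part and Lemma~\ref{lemma: parametrization of zero set} applies to it. It therefore remains to show that the measure $\tilde\mu$ of $\tilde h$ equals $\pi\sum_j(\gamma_j)_*(\alpha_j\lambda|_{\R\setminus S})$.

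For $\tilde h$, I will identify $\tilde\mu$ through its Poisson integral: a positive measure is determined by $\int \mathcal P_2(z,t)\,d\mu(t)$ for $z\in\uppertwo$. The constant part of $\tilde h$ does not contribute to the imaginary part, and I will show $b=0$. I then compute $\im\tilde h(z,w)$ by first fixing $u=\re$-slice variables and applying the one-variable theory in $w$. By Lemma~\ref{lemma: properties of RR functions}(i),(iii), for $t\notin S$ the slice $\tilde h(t,\cdot)$ is a real rational \HNf{} whose denominator $\tilde q(t,\cdot)$ has the simple zeros $g_1(t),\dots,g_n(t)$. By \eqref{eq: one variable real-rational func}, its one-variable measure is $\pi\sum_j c_j(t)\delta_{g_j(t)}$, where
\[
c_j(t)=-\frac{\tilde p(t,g_j(t))}{\partial_w\tilde q(t,g_j(t))}=\alpha_j(t)>0,
\]
which explains the absolute value in \eqref{eq: definition weight function}. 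Hence
\[
\im\tilde h(t,w)=b_2(t)\im w+\sum_j\alpha_j(t)\frac{\im w}{|g_j(t)-w|^2}.
\]
The coefficient $b_2(t)$ vanishes because the $w$-degree of $\tilde p(t,\cdot)$ does not exceed that of $\tilde q(t,\cdot)$ for $t\notin S$.

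Next I pass from boundary values in $z$ to the full two-variable function. For fixed $w\in\upper$, the map $z\mapsto \im\tilde h(z,w)$ is a positive harmonic function on $\upper$. I want to write it as the Poisson integral in $z$ of its boundary values $\im\tilde h(t,w)$, which are defined for $t\notin S$, plus a possible linear term $\beta(w)\im z$. The linear term is the coefficient $b_1$, which is $0$ because the $z$-degrees match. The real obstacle is showing that no singular part sits at the points of $S$ (or at infinity). Boundary values of $\tilde h(\cdot,w)$ are rational in $z$ and continuous where the denominator $\tilde q(\cdot,w)$ has no real zero. For fixed $w\in\upper$, $\tilde q(\cdot,w)$ has no real zeros at all, since otherwise some $\tau-z$ would divide $\tilde q$ or $\tilde h$ would have a pole in $\uppertwo$. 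I will justify this either with Hurwitz as in Lemma~\ref{lemma: properties of RR functions}(i), or with the limits $c_z(\tau)$ of Corollary~\ref{cor: main cor}(i), which vanish for all $\tau$ after the decomposition. This makes $z\mapsto\tilde h(z,w)$ a rational function holomorphic on a neighborhood of $\overline{\upper}$, bounded at infinity, so its imaginary part is exactly the Poisson integral of its real boundary values.

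Combining the two steps and using Fubini gives
\[
\im\tilde h(z,w)=\int_{\R\setminus S}\sum_j\alpha_j(t)\frac{\im z}{|t-z|^2}\frac{\im w}{|g_j(t)-w|^2}\,dt
=\int_{\Rtwo}\mathcal P_2\,d\Big(\sum_j(\gamma_j)_*(\alpha_j\lambda|_{\R\setminus S})\Big).
\]
Comparing this with Corollary~\ref{cor: main cor}(ii) and invoking uniqueness of the data, through injectivity of the Poisson transform on measures satisfying \eqref{eq: growth condition Nevanlinna measure}, yields the formula. The main points needing care are the growth and integrability of $\alpha_j$ near $S$, which follow from finiteness of the resulting integrals, and the harmonic-extension step above.
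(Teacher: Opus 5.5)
Your proposal is correct and follows essentially the same route as the paper's proof in Appendix~\ref{app: B}. You split off the vertical terms, write $z\mapsto\im\tilde h(z,w)$ as the Poisson integral of its boundary values (using that $\tilde q(\cdot,w)$ has no real zeros, by the Hurwitz argument of Lemma~\ref{lemma: properties of RR functions}(i)), insert the one-variable formula for the slices $\tilde h(t,\cdot)$ with $t\notin S$, and conclude by uniqueness; the only difference is that you identify the vertical measures through their Poisson integrals rather than through $\int_\R K_2\,dt_2=\pi K_1$. One small slip: the Poisson integral in $z$ carries a factor $\frac{1}{\pi}$, so your combined display should read $\im\tilde h(z,w)=\frac{1}{\pi}\int_{\R\setminus S}\sum_j\alpha_j(t)\,\mathcal{P}_1(z,t)\,\mathcal{P}_1(w,g_j(t))\,dt$, and only with that factor does comparison with $\frac{1}{\pi^2}\int\mathcal{P}_2\,d\tilde\mu$ give $\tilde\mu=\pi\sum_j(\gamma_j)_*(\alpha_j\lambda|_{\R\setminus S})$ rather than $\pi^2$ times it.
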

\begin{proof}
    See Appendix~\ref{app: B}.
\end{proof}

\begin{corollary}
    Let $h$ be as in Theorem~\ref{thm: structure formula representing measures} and suppose $\varphi:\Rtwo\to\R$ is Borel measurable and $\mu$-integrable. Then $\mu$ satisfies
\begin{equation*}
    \int_{\Rtwo} \varphi(t)\,d\mu(t)  = \pi \sum_{j=1}^n \int_{\R\setminus S} \varphi(t,g_j(t))\alpha_j(t) \,dt  
     + \pi\sum_{k=1}^lc_{z}(\tau_k)\int_{\R} \varphi(\tau_k,t) \,dt.
\end{equation*}
\end{corollary}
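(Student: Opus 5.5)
This follows directly from the structure formula \eqref{eq: the structure formula} by integrating $\varphi$ against each of the two types of summands. Since $\mu$ is a finite sum of positive measures, the plan is to verify the identity first for nonnegative Borel $\varphi$ and then pass to general $\mu$-integrable $\varphi$ by linearity.

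For the graph terms, the change-of-variables formula for push-forward measures gives, for any nonnegative Borel $\varphi$,
\begin{equation*}
\int_{\Rtwo}\varphi\,d(\gamma_j)_*(\alpha_j\lambda|_{\R\setminus S}) = \int_{\R\setminus S}\varphi(\gamma_j(t))\,\alpha_j(t)\,dt=\int_{\R\setminus S}\varphi(t,g_j(t))\alpha_j(t)\,dt .
\end{equation*}
Here $\gamma_j(t)=(t,g_j(t))$ is continuous, hence Borel, on $\R\setminus S$ by Lemma~\ref{lemma: parametrization of zero set}. The weight $\alpha_j$ is a nonnegative Borel (indeed real-analytic off a discrete set) function, so $\alpha_j\lambda|_{\R\setminus S}$ is a positive Borel measure and the formula applies.

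For the vertical terms, Tonelli's theorem on the product $\delta_{\tau_k}\times\lambda|_{\R\setminus S}$ gives $\int\varphi\,d(\delta_{\tau_k}\times\lambda|_{\R\setminus S})=\int_{\R\setminus S}\varphi(\tau_k,t)\,dt$. Because $S$ is finite, it is $\lambda$-null, so this equals $\int_\R\varphi(\tau_k,t)\,dt$, which matches the stated form.

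Summing these contributions with the factor $\pi$ yields the identity for $\varphi\ge 0$. For a general $\mu$-integrable $\varphi$, write $\varphi=\varphi^+-\varphi^-$. Each side is finite for $\varphi^\pm$ because $\int\varphi^\pm\,d\mu<\infty$ and every summand on the right is bounded by the left side, so subtracting gives the claim. No real obstacle is expected; the only point needing care is the measurability of the $g_j$ and $\alpha_j$, which is supplied by Lemma~\ref{lemma: parametrization of zero set} and Theorem~\ref{thm: structure formula representing measures}.
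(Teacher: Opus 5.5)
Your proposal is correct and takes essentially the paper's approach. The paper states this corollary without a separate proof, as an immediate consequence of the structure formula \eqref{eq: the structure formula}, and your argument is exactly that unpacking: the change-of-variables formula for the push-forward graph terms, Tonelli on the vertical terms $\delta_{\tau_k}\times\lambda|_{\R\setminus S}$ (with $S$ finite and hence $\lambda$-null), and the split $\varphi=\varphi^+-\varphi^-$.
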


Note that the above applies to any $\varphi\in C^1(\Rtwo)$ satisfying
\begin{equation*}
    \abs{\varphi(x)}\lesssim (1+x_1^2)^{-1}(1+x_2^2)^{-1}, \quad (x_1,x_2)\in\Rtwo,
\end{equation*}
since $\mu$ satisfies the growth condition \eqref{eq: growth condition Nevanlinna measure}.

For later use we note the following property of the weights.

\begin{corollary}\label{cor: weights are stirctly positive}
    Let $h$ be as in Theorem~\ref{thm: structure formula representing measures}. Then the weights of $\mu$ are strictly positive on $\R\setminus S$.
\end{corollary}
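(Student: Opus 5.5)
We need $\alpha_j(t)>0$ for $t\in\R\setminus S$, i.e. $\tilde p(t,g_j(t))\neq 0$ and $\partial_w\tilde q(t,g_j(t))\neq 0$. We work with the slice $\tilde h(t,\cdot)$ for fixed $t\in\R\setminus S$ and use the one-variable theory recalled in Subsection~\ref{section: onevar_prelims}.

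Fix $t\in\R\setminus S$. By Lemma~\ref{lemma: properties of RR functions}(i), the slice $w\mapsto \tilde h(t,w)$ is a one-variable real rational Herglotz-Nevanlinna function. By Lemma~\ref{lemma: properties of RR functions}(iii), its denominator in reduced form is exactly $\tilde q(t,\cdot)$, which has $n$ distinct real zeros. By Lemma~\ref{lemma: parametrization of zero set} and its construction, these zeros are precisely $g_1(t),\dots,g_n(t)$.

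Since these zeros are simple, $\partial_w\tilde q(t,g_j(t))\neq 0$. This makes $\alpha_j(t)$ well defined and finite.

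Since $\tilde p(t,\cdot)$ and $\tilde q(t,\cdot)$ share no common factor for $t\notin S_2\subset S$, we get $\tilde p(t,g_j(t))\neq0$. Alternatively, the formula from Subsection~\ref{section: onevar_prelims} applied to the slice gives
\begin{equation*}
c_j=-\frac{\tilde p(t,g_j(t))}{\partial_w\tilde q(t,g_j(t))}>0,
\end{equation*}
since each point mass of the one-variable measure at a genuine pole is strictly positive. Either way, $\alpha_j(t)=|c_j|>0$.

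The only delicate point is making sure the index set is right. That is, we must confirm that the $g_j(t)$ are exactly the poles of the slice, rather than merely zeros of $\tilde q(t,\cdot)$ that might cancel. This is exactly what Lemma~\ref{lemma: properties of RR functions}(iii) ensures off $S$.

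We also need the slice to be a genuine Herglotz-Nevanlinna function with constant polynomial part, which holds for $\tilde h$. A possible nonzero $b\,w$ term would not affect the residues anyway, so this point is harmless.
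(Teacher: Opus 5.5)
Your proof is correct and follows essentially the same route as the paper: Lemma~\ref{lemma: properties of RR functions}(iii) gives that the zeros $g_j(t)$ of $\tilde q(t,\cdot)$ are simple, so $\partial_w\tilde q(t,g_j(t))\neq 0$, and that $\tilde p(t,\cdot)$ and $\tilde q(t,\cdot)$ are coprime off $S$, so $\tilde p(t,g_j(t))\neq 0$. Your alternative argument via strict positivity of the one-variable residues $c_j$ is also valid, but it is not needed.
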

\begin{proof}
    Consider a weight $\alpha$ associated with a parametrizing function $g$ and let $\tau\in\R\setminus S$. Then by Lemma~\ref{lemma: properties of RR functions}(iii) we have $\partial_w\tilde{q}(\tau,g(\tau))\neq 0$, and hence $\alpha$ is defined at $\tau$. Lemma~\ref{lemma: properties of RR functions}(iii) further implies that $\tilde{p}(\tau,\cdot)$ and $\tilde{q}(\tau,\cdot)$ are coprime. Since these polynomials are in $\R[w]$ and $\tilde{q}(\tau,g(\tau))=0$, it follows that $\tilde{p}(\tau,g(\tau))$ is nonzero. The weight, which is nonnegative by definition, is therefore also nonzero at $\tau$, and the desired conclusion follows.
\end{proof}

Theorem~\ref{thm: structure formula representing measures} allows us to identify the support of the representing measure of a two-variable real rational Herglotz-Nevanlinna function with the real zero set of its denominator. The Clark measure analogue has already been laid out in \cite{clark_measures_two}.
\begin{corollary}\label{cor: support = zero set}
    Let $h=p/q$ be a real rational Herglotz-Nevanlinna function of two variables and assume the data of $h$ are $(0,0,\mu)$. Then $\supp \mu = \Z(q)\cap\Rtwo$. 
\end{corollary}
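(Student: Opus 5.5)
We prove $\supp\mu = \Z(q)\cap\Rtwo$ by reading both sides off the structure formula.

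\emph{Setup.} First dispose of the degenerate cases. If $h$ does not depend on $w$ (so $n=0$), or symmetrically on $z$, then $h$ is a one-variable function lifted to two variables. By Subsection~\ref{section: onevar_prelims} its measure is $\pi\sum_j c_j\,\delta_{\tau_j}\times\lambda$, whose support is the union of the lines $\{\tau_j\}\times\R$. This is exactly $\Z(q)\cap\Rtwo$ for $q=\prod_j(\tau_j-z)$. So assume $n\geq 1$ and apply Theorem~\ref{thm: structure formula representing measures}:
\[
\mu = \pi\sum_j(\gamma_j)_*(\alpha_j\lambda|_{\R\setminus S}) + \pi\sum_k c_z(\tau_k)\,(\delta_{\tau_k}\times\lambda).
\]
Since all terms are positive measures, the support of $\mu$ is the union of the supports of the individual terms (a finite union of closed sets).

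\emph{Vertical terms.} Each $c_z(\tau_k)>0$ by Lemma~\ref{lemma: decomposition}, so the support of the $k$th vertical term is the full line $\{\tau_k\}\times\R$.

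\emph{Curve terms.} The weight $\alpha_j$ is continuous on $\R\setminus S$, since $g_j$ is real-analytic and $\partial_w\tilde q(t,g_j(t))\neq 0$ there. By Corollary~\ref{cor: weights are stirctly positive} it is also strictly positive there. Hence every point $(t,g_j(t))$ with $t\notin S$ lies in the support. Conversely, the measure is concentrated on the graph, so its support is precisely the closure of $\{(t,g_j(t)):t\in\R\setminus S\}$. By Lemma~\ref{lemma: parametrization of zero set}, the union over $j$ of these closures equals $\Z(\tilde q)\cap\Rtwo$.

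\emph{Matching with $\Z(q)$.} We need $\Z(q)=\Z(\tilde q)\cup\bigcup_k\{z=\tau_k\}$ as real zero sets. Writing the decomposition over a common denominator gives
\[
h=\frac{\tilde p\prod_k(\tau_k-z)+\tilde q\sum_k c_z(\tau_k)\prod_{i\neq k}(\tau_i-z)}{\tilde q\prod_k(\tau_k-z)}.
\]
The key point is that this fraction is already reduced, so $q$ equals $\tilde q\prod_k(\tau_k-z)$ up to a constant. Suppose an irreducible factor of the denominator divides the numerator.
\begin{itemize}
\item If the factor is $\tau_k-z$: it divides every numerator term except $\tilde q\,c_z(\tau_k)\prod_{i\neq k}(\tau_i-z)$. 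That term is not divisible by $\tau_k-z$, because $\tilde q$ is coprime to $\tau_k-z$, $c_z(\tau_k)\neq0$, and the $\tau_i$ are distinct.
\item If the factor divides $\tilde q$: then it must divide $\tilde p\prod_k(\tau_k-z)$. It is coprime to $\tilde p$, and it is coprime to each $\tau_k-z$ by the hypothesis on $\tilde q$ from Lemma~\ref{lemma: decomposition}. This is a contradiction.
\end{itemize}
Hence no cancellation occurs and the zero sets agree.

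\emph{Main obstacle.} The most delicate step is the curve-term support: it requires positivity and continuity of the weight along the full parametrized graphs, and that the closure in Lemma~\ref{lemma: parametrization of zero set} captures the points over $S$. Both are supplied by the earlier results, since the points over the finite set $S$ lie in the closure of the graph.
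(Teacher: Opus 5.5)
Your proof is correct and follows essentially the same route as the paper. Both split $\Z(q)\cap\Rtwo$ into the vertical lines and $\Z(\tilde q)\cap\Rtwo$, read off $\supp\mu\subset\Z(q)\cap\Rtwo$ from Theorem~\ref{thm: structure formula representing measures}, and obtain the reverse inclusion from $c_z(\tau_k)>0$ and strict positivity of the weights on $\R\setminus S$. Your two extra checks fill in points the paper's proof takes for granted: that $\tilde q\prod_k(\tau_k-z)$ is already the reduced denominator, and the case $n=0$, which that theorem does not cover.
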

\begin{proof}
    In the notation of Theorem~\ref{thm: structure formula representing measures}, we have
     \begin{equation*}
        Z(q)\cap\Rtwo = \big(\bigcup\limits_{k=1}^l\{\tau_k\}\times\R\big)\cup \big(\Z(\tilde{q})\cap\Rtwo\big).
    \end{equation*}
    By Theorem~\ref{thm: structure formula representing measures}, the support of $\mu$ is concentrated on the graphs of the parametrizing functions $g_1,\dots,g_n$ together with the vertical lines $\{\tau_k\}\times\R$, and hence $\supp\mu\subset\Z(q)\cap\Rtwo$. 
    
    As for the reverse inclusion, any point in $\Z(q)\cap\Rtwo$ lies either on a vertical line or in the closure of the set
    \begin{equation*}
       \bigcup_{j=1}^n \{(t,g_j(t)):\,t\in \R\setminus S \}=:G.
    \end{equation*}
    If $(\tau,\sigma)$ lies on a vertical line in $\Z(q)\cap\Rtwo$, then $\tau-z$ is a factor of $q$, and hence the constant $c_z(\tau)$ from Corollary~\ref{cor: main cor}(i) is positive. Therefore, by Theorem~\ref{thm: structure formula representing measures}, any ball $U\ni (\tau,\sigma)$ has positive measure:
    \begin{equation*}
        \mu(U) \geq c_z(\tau)(\delta_{\tau}\times\lambda)(U) >0.
    \end{equation*}
    It follows that $\supp\mu$ contains all vertical components of $\Z(q)\cap\Rtwo$. If $(\tau,\sigma)\in G$, then $(\tau,\sigma)=(\tau,g(\tau))=\gamma(\tau)$ for one of the parametrizing functions $g=g_j$. Let $U\ni (\tau,\sigma)$ be any ball. Since $\tau\notin S$ and $S$ is finite, there is a small interval $I\subset \R\setminus S$ containing $\tau$ such that the graph of $g$ over $I$ is contained in $U$. By Corollary~\ref{cor: weights are stirctly positive} the weight $\alpha$ associated with $g$ is strictly positive on $I$, so Theorem~\ref{thm: structure formula representing measures} gives the ball $U$ positive measure:
    \begin{equation*}
        \mu(U) \geq \gamma_* (\alpha \,\lambda) (U) > 0.
    \end{equation*}
    It follows that $G\subset\supp\mu$, and hence $\overline{G}\subset\supp\mu$. Together with the vertical line case, we arrive at $\supp\mu\supset\Z(q)\cap\Rtwo$.
\end{proof}

\section{\bfseries Extremal Nevanlinna measures for real rational HN-functions}\label{section: extremal_theory}

In Lemma \ref{lemma: decomposition} we have seen that a linear factor of the form $\tau-z$ in the denominator leads to an additive decomposition into rational Herglotz-Nevanlinna functions. Also for functions with denominator of total degree $2$ it can be shown that whenever the denominator factorizes then the function is not extreme. However, it turns out that this does not remain  true  for larger degree.

\subsection{An extremal rational HN-function with a reducible denominator}\label{section: extremal_example}

Consider the real rational function 
 \begin{equation*}
    H(z,w):= -\frac1{z-\frac1z+w-\frac1w}, 
\end{equation*}
which is Herglotz-Nevanlinna as it is a composition of Herglotz-Nevanlinna functions. Simplifying the rational expression yields that the denominator factorizes, namely
 \begin{equation}\label{eq: final example RR function}
        H(z,w)= -\frac{zw}{(z+w)(zw-1)}.
\end{equation}
However, an easy calculation shows that $H$ cannot be decomposed (nontrivially) as a sum of two real rational Herglotz-Nevanlinna functions. As such, its representing measure is extremal in the smaller cone of Nevanlinna measures for real rational Herglotz-Nevanlinna functions. More than that, in what follows we will show that it is also extremal in the full cone of Nevanlinna measures. To this end we  first use Theorem~\ref{thm: structure formula representing measures} in order to determine the measure of \eqref{eq: final example RR function} explicitly. Substituting the measure into the  integral operator from Section~\ref{section: extremality criterion} and making a few appropriate observations ultimately leads to the extremality through Theorem~\ref{thm: extremal condition}. This is remarkable since there do exist Herglotz-Nevanlinna functions with the same denominator as the factors in \eqref{eq: final example RR function}. For example, both functions 
    \begin{equation}\label{eq: examples}
         h_1(z,w):=-\frac{1}{z+w} \qquad\text{ and }\qquad  h_2(z,w):=-\frac{z}{zw-1}
    \end{equation}
are extremal. For $h_1$ this is shown explicitly in \cite{nedic2022structurenevanlinnameasures},  whereas $h_2$ can be written as  $-\frac{1}{-\frac1z+w}$, i.e.,  $h_1$ composed with an automorphism of $\uppertwo$, and hence also is extremal.

\subsubsection{The measure}

We first find the measure $\mu$ of $H$ explicitly. It follows from \cite[Corollary 4.6]{Luger_2019} that the representing data of $H$ are $(0,0,\mu)$. Therefore, Theorem~\ref{thm: structure formula representing measures} applies. The parametrizing functions are $g_1(t)=-t$ and $g_2(t)=1/t$. With $\partial_w ((z+w)(zw-1))=z^2+2zw-1$ we determine the weight of $\mu$ associated with each $g_j$ as
\begin{equation}\label{eq: weights for special function}
    \alpha_1(t)= \frac{t^2}{1+t^2} \quad\text{and}\quad \alpha_2(t)= \frac{1}{1+t^2}.
\end{equation}
Consequently,
\begin{equation}\label{eq: measure for final example}
    \mu = \pi (t,-t)_* \bigg( \frac{t^2}{1+t^2}\lambda|_{\R\setminus \{0\}}\bigg) + \pi (t,1/t)_* \bigg( \frac{1}{1+t^2}\lambda|_{\R\setminus \{0\}}\bigg).
\end{equation}
This can be confirmed by substituting \eqref{eq: measure for final example} into the representation \eqref{eq: integral representation}, which recovers $H$.

\subsubsection{Extremality}

We apply Theorem \ref{thm: extremal condition} in order to show the extremality of $\mu$. To this end let 
\begin{equation}\label{eq: nu is Nevanlinna}
    \int_{\R} \frac{1}{(t-z)^2(1+t^2)}\bigg[ \frac{t^2}{(t+\overline{w})^2} f_1(t) + \frac{t^2}{(1-t\overline{w})^2} f_2(t) \bigg] \, dt = 0, \quad  (z,w)\in\uppertwo,
\end{equation}
where $f_1(t):=f(t,-t)$ and $f_2(t):=f(t,1/t)$. Since $f\in L^\infty(\Rtwo,\mathbb R;\mu)$ and the weights of $\mu$ are positive, $f_1$ and $f_2$ are essentially bounded with respect to not only $\mu$ but also $\lambda$. Without loss of generality we assume $0\leq f_1,f_2 \leq 1$ $\lambda$-a.e For convenience, we write as if we are integrating over all of $\R$, knowing that it really is $\R\setminus S$ with $S=\{0\}$. 

Denote the integrand by $F_{z,w}$. For $w=i\varepsilon$, $\varepsilon > 0$, we find that
\begin{equation}\label{eq: integrand is dominated}
    \abs{F_{z,i\varepsilon}(t)}\leq \frac{1+t^2}{|t-z|^2(1+t^2)} \leq \frac{1}{|t-z|^2}, \quad \text{a.e } t\in\R;
\end{equation}
indeed, since $f_1$ and $f_2$ are essentially bounded by $1$, the first of the above inequalities is obtained from
\begin{equation*}
    \abs{\frac{t^2}{(t-i\varepsilon)^2} f_1(t) + \frac{t^2}{(1+it\varepsilon)^2} f_2(t) } \leq \abs{\frac{t^2}{(t-i\varepsilon)^2}} + \abs{\frac{t^2}{(1+it\varepsilon)^2} } = \frac{t^2}{t^2+\varepsilon^2} + \frac{t^2}{1+(t\varepsilon)^2}\leq 1+t^2.
\end{equation*}
Now, since the right hand side of \eqref{eq: integrand is dominated} defines an integrable function in $t\in\R$, Lebesgue's dominated convergence theorem yields
\begin{equation*}
    \lim_{\varepsilon\to 0^+} \int_{\R} F_{z,i\varepsilon}(t)\,dt  =\int_{\R} F_{z,0}(t)\,dt  =\int_{\R} \frac{f_1(t)+t^2f_2(t)}{(t-z)^2(1+t^2)} \,dt = 0, \quad  z\in\upper.
\end{equation*}
Since $u(t):=\frac{f_1(t)+t^2f_2(t)}{(1+t^2)}\in L^\infty(\R,\R;\lambda)$, Lemma~\ref{lemma: H is identically zero} applies, and hence $u$ is constant a.e. It follows that
\begin{equation}\label{eq: equation 1}
    f_1(t)+t^2f_2(t) = C(1+t^2),\quad \text{a.e } t\in\R,
\end{equation}
for some constant $C\in \R$.

We obtain an additional equation by returning back to \eqref{eq: nu is Nevanlinna} and manipulating it appropriately; first note that for $w=iR$, $R>0$, we have
\begin{equation*}
    R^2 \abs{F_{z,iR}}\leq  \frac{R^2}{|t-z|^2(1+t^2)} \bigg( \frac{t^2}{t^2+R^2} + \frac{t^2}{1+(tR)^2} \bigg) \leq \frac{t^2+1}{|t-z|^2(1+t^2)} \leq  \frac{1}{|t-z|^2},
\end{equation*}
and hence Lebesgue's dominated convergence theorem applies and yields
\begin{equation*}
    \lim_{R\to \infty} -R^2 \int_{\R} F_{z,iR}(t)\,dt  =\int_{\R} \lim_{R\to \infty} -R^2 F_{z,iR}(t)\,dt  =\int_{\R} \frac{t^2f_1(t)+f_2(t)}{(t-z)^2(1+t^2)} \,dt = 0, \quad  z\in\upper.
\end{equation*}
By Lemma~\ref{lemma: H is identically zero}, we arrive at
\begin{equation*}
    t^2f_1(t)+f_2(t) = D(1+t^2),\quad \text{a.e } t\in\R,
\end{equation*}
for some constant $D\in \R$. Together with \eqref{eq: equation 1}, solving for $f_1$ and $f_2$ yields
\begin{equation*}
    f_1(t) = \frac{C-Dt^2}{1-t^2} \quad \text{and} \quad  f_2(t) = \frac{D-Ct^2}{1-t^2},
\end{equation*}
for a.e $t\in\R$ such that $t\neq \pm1$. Given that $f_1$ and $f_2$ are essentially bounded by $1$, we conclude that $C=D=:c$ and therefore also $f_1 = f_2 = c$. It follows that $f\equiv c$ with $0\leq c\leq 1$, and hence Theorem \ref{thm: extremal condition} implies that $\mu$ is extremal.

\subsection{A class of extremal rational HN functions of bidegree $(m,2)$}

In the context of real rational Herglotz-Nevanlinna functions, Theorem~\ref{thm: extremal condition} can be refined using Theorem~\ref{thm: structure formula representing measures}. If the representing measure contains a nonzero vertical term in addition to a nonvertical part, then Lemma~\ref{lemma: decomposition} yields a nontrivial decomposition, and hence the measure is not extremal. Thus, when studying extremality, we may restrict attention to real rational Herglotz-Nevanlinna functions whose representing measures have no vertical part. In this case,  for $f\in L^\infty(\Rtwo,\R;\mu)$, Theorem~\ref{thm: structure formula representing measures} yields
\begin{equation}\label{eq: operator on real-rational function}
    (\mathcal{T}_\mu f)(z,w)=  \pi  \int_{\R\setminus S} \frac{1}{(t-z)^2} \sum_{j=1}^n\frac{f_j(t)\alpha_j(t)}{(g_j(t)-\overline{w})^2}\,dt, \quad (z,w)\in\uppertwo,
\end{equation}
where for each $j$, $f_j(t):=f(t,g_j(t))$. As seen in Section~\ref{section: extremal_example}, $f_j\in L^\infty(\R,  \R;\lambda)$ for each $j$  since $\alpha_j>0$ on $\R\setminus S$   by Corollary~\ref{cor: weights are stirctly positive}. By Theorem~\ref{thm: extremal condition}, $\mu$ is extremal if and only if whenever  the integral \eqref{eq: operator on real-rational function} vanishes on $\uppertwo$, there is some constant $c\in\R$ (depending on $f$) such that $(f_1,\dots,f_n)=(c,\dots,c)$ for $\lambda$-a.e $t\in\R$.

In the following, we obtain a family of extremal measures from a generalization of the method considered in Section~\ref{section: extremal_example}, applied to the specialized operator \eqref{eq: operator on real-rational function}. The associated subclass of bidegree $(m,2)$ real rational Herglotz-Nevanlinna functions is made explicit.

\begin{theorem}\label{thm: extremal measure}
    Suppose $\varphi$ is a nonconstant one-variable real rational Herglotz-Nevanlinna function. Then
    \begin{equation}\label{eq: general form of extremal function}
        h(z,w)= -\frac{ 1}{\varphi(z)+ w-\frac{1}{w}}
    \end{equation}
    is a real rational Herglotz-Nevanlinna function, and its representing measure  is extremal. 
\end{theorem}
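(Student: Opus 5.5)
We follow the method of Subsection~\ref{section: extremal_example}, where $H$ is the case $\varphi(z)=z-1/z$. First we show that $h$ is Herglotz-Nevanlinna with data $(0,0,\mu)$ and no vertical part. Then we compute $\mu$ from Theorem~\ref{thm: structure formula representing measures}. Finally we show that any $f$ with $\mathcal{T}_\mu f=0$ is constant and invoke Theorem~\ref{thm: extremal condition}.

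\emph{Herglotz property and the measure.} For $(z,w)\in\uppertwo$ we have $\im(\varphi(z)+w-1/w)\geq \im(w-1/w)>0$, so $\varphi(z)+w-1/w$ does not vanish on $\uppertwo$. Hence $h=-1/(\varphi(z)+w-1/w)$ is holomorphic there, and it is Herglotz-Nevanlinna as a composition of Herglotz-Nevanlinna maps. It is real rational since $\varphi$ is. Writing $\varphi=p_\varphi/q_\varphi$, we get
\begin{equation*}
h=-\frac{wq_\varphi(z)}{wp_\varphi(z)+(w^2-1)q_\varphi(z)}.
\end{equation*}
At a pole $\tau$ of $\varphi$ the denominator reduces to $wp_\varphi(\tau)\not\equiv0$, so it has no factor $\tau-z$. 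The data should be $(0,0,\mu)$ by \cite[Corollary 4.6]{Luger_2019}; for instance, $h(iy,iy)/(iy)\to0$ as $y\to\infty$. We still need to check that the polydegree of $h$ equals that of its denominator.

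For $t$ outside the finite set $S$ (the poles of $\varphi$ together with exceptional points), the equation $w^2+\varphi(t)w-1=0$ has two distinct real roots $g_1(t)>0>g_2(t)$ with $g_1g_2=-1$. Near a root $g$ we have $\varphi(t)+w-1/w\approx(1+1/g^2)(w-g)$. Hence the residue of $h(t,\cdot)$ at $g$ is $-(1+g^{-2})^{-1}$, and the weights \eqref{eq: definition weight function} are
\begin{equation*}
\alpha_j=\frac{g_j^2}{1+g_j^2}.
\end{equation*}
One can also compute these directly from \eqref{eq: definition weight function}. Since $g_2=-1/g_1$, this gives $\alpha_1=1/(1+g_2^2)$, $\alpha_2=1/(1+g_1^2)$ and $\alpha_1+\alpha_2=1$. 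By Theorem~\ref{thm: structure formula representing measures}, $\mu$ has no vertical part and $\mathcal{T}_\mu$ takes the form \eqref{eq: operator on real-rational function} with $n=2$.

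\emph{Kernel of $\mathcal{T}_\mu$.} Let $\mathcal{T}_\mu f=0$ and set $f_j(t)=f(t,g_j(t))$, which lie in $L^\infty(\R,\R;\lambda)$ by Corollary~\ref{cor: weights are stirctly positive}. We may assume $|f_j|\leq1$. We take two limits in $w$, each justified by dominated convergence with majorant $2/|t-z|^2$.

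\begin{itemize}
\item \textbf{Limit $w=i\varepsilon\to0^+$.} The domination uses $\alpha_j/|g_j-i\varepsilon|^2\leq \alpha_j/g_j^2=1/(1+g_j^2)\leq1$. The limit gives $\int_\R u(t)\,(t-z)^{-2}\,dt=0$ for all $z\in\upper$, where $u=f_1/(1+g_1^2)+f_2/(1+g_2^2)=f_1\alpha_2+f_2\alpha_1$.
\item \textbf{Limit $w=iR$, $R\to\infty$, after multiplying by $-R^2$.} The domination uses $R^2\alpha_j/|g_j-iR|^2\leq\alpha_j\leq1$. The limit gives the same identity with $f_1\alpha_1+f_2\alpha_2$ in place of $u$.
\end{itemize}

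By Lemma~\ref{lemma: H is identically zero}, both bounded functions are a.e.\ constant:
\begin{equation*}
f_1\alpha_2+f_2\alpha_1=C,\qquad f_1\alpha_1+f_2\alpha_2=D.
\end{equation*}
Adding the two equations and using $\alpha_1+\alpha_2=1$ gives $f_1+f_2=C+D$. Subtracting gives $(f_1-f_2)(\alpha_1-\alpha_2)=D-C$.

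\emph{Main obstacle: forcing $C=D$.} This is where the specific structure of $\varphi$ enters. We have $\alpha_1-\alpha_2=(g_1^2-1)/(g_1^2+1)$, which vanishes exactly when $g_1=1$, i.e.\ when $\varphi(t)=g_2+1/g_2\cdot(-1)\cdot(-1)$; more simply, $\varphi(t)=1/g_1-g_1=0$. A nonconstant real rational Herglotz-Nevanlinna function is strictly increasing on each interval between consecutive poles. If it has poles, it maps such an interval onto $\R$; if it has none, it is affine with positive slope. In either case it has a real zero $t_0\notin S$, near which $\alpha_1-\alpha_2$ is continuous and tends to $0$.

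Then $|D-C|=|f_1-f_2|\,|\alpha_1-\alpha_2|\leq 2|\alpha_1-\alpha_2|$ a.e.\ near $t_0$, which forces $C=D$. Away from the finitely many zeros of $\varphi$ this gives $f_1=f_2$ a.e., and then $f_1=f_2=C$. Thus $f$ is $\mu$-a.e.\ constant, so $\ker\mathcal{T}_\mu=\R\mathbf{1}$, and Theorem~\ref{thm: extremal condition} yields extremality.
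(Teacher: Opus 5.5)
Your argument follows the paper's proof almost step for step. You use $g_1g_2=-1$ and $\alpha_j=g_j^2/(1+g_j^2)$, the two limits $w=i\varepsilon\to0^+$ and $-R^2(\cdot)$ at $w=iR\to\infty$ under dominated convergence, and Lemma~\ref{lemma: H is identically zero}, and then show that the two constants coincide. Your add/subtract step using $\alpha_1+\alpha_2=1$ and $\alpha_1/g_1^2=\alpha_2$ is a tidier form of the paper's invertibility of $M$.

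The gap is in the step forcing $C=D$: it is not true that every nonconstant real rational Herglotz-Nevanlinna function has a real zero. Take $\varphi(z)=c/(\tau-z)$ with $c>0$, $\tau\in\R$ (for instance $\varphi(z)=-1/z$). Then $\varphi$ maps $(-\infty,\tau)$ onto $(0,\infty)$ and $(\tau,\infty)$ onto $(-\infty,0)$. Your claim that $\varphi$ maps the intervals between poles onto $\R$ holds only for bounded intervals between consecutive poles, or for the unbounded ones when $\varphi$ has a positive linear term. Since $\alpha_1-\alpha_2=(g_1^2-1)/(g_1^2+1)=-\varphi/\sqrt{\varphi^2+4}$, in this case $\abs{\alpha_1-\alpha_2}$ is strictly positive at every real point. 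So no argument localized near a finite $t_0$ can give $C=D$.

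The repair is Part~2 of the paper's proof: you only need a sequence $(t_n)$, avoiding a prescribed $\lambda$-null set, with $\varphi(t_n)\to0$. Such a sequence always exists. Either $\varphi$ has a real zero, or $\varphi=c/(\tau-z)$, in which case you let $t_n\to\infty$. Since $\abs{D-C}\leq 2\abs{\varphi(t)}/\sqrt{\varphi(t)^2+4}$ for a.e.\ $t$, this gives $C=D$, and the rest of your argument goes through.

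A smaller point: the data of $h$ are $(\re h(i,i),0,\mu)$, and $\re h(i,i)$ need not vanish (for $\varphi(z)=z+1$ it equals $-1/10$). So Theorem~\ref{thm: structure formula representing measures} must be applied to $h-\re h(i,i)$, as the paper does; this does not change $\mu$. Finally, vanishing of the linear coefficients follows directly from $\abs{h(z,w)}\leq 1/\im(w-1/w)$ and $h(z,iy)\to0$. This settles the point you left as ``still need to check''.
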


\begin{proof}
    Since $\varphi$ and $\mathcal{J}(w):=w-1/w$ are real rational \HNf s, $h$ is also a real rational HN-function.

    To fix notation, in view of  Theorem \ref{thm: structure formula representing measures} the measure $\mu$ of the function $h$ in \eqref{eq: general form of extremal function} is supported on the graphs of two functions, $g_1$ and $g_2$, with weights $\alpha_1$ and $\alpha_2$. From here, the proof is split into three parts. In the first part, we show that $g_j$ and $\alpha_j$ satisfy the  identities
    \begin{equation}\label{eq: boundary property}
            \frac{\alpha_1}{g_1^2} +  \frac{\alpha_2}{g_2^2}  =  \alpha_1+\alpha_2 \equiv  1
    \end{equation}
    and $g_1g_2\equiv -1$. In the second part, we derive an additional property. In the third and final part, we show that these imply that $\mu$ is extremal.

    \textbf{Part 1}
    Consider $h$ on the reduced form $p/q$ with $p,q\in \C[z,w]$. We can write $q(z,w)=A(z)w^2+B(z)w-A(z)$ for some coprime $A,B\in\R[z]$ such that $\varphi=B/A$. Also note that $p(z,w)=-wA(z)$. By Vieta's formulas, the parametrizing functions $g_1$ and $g_2$ thus satisfy
    \begin{equation}\label{eq: vieta's formulas}
        g_1+g_2=-\varphi \quad\text{and}\quad g_1g_2\equiv -1.
    \end{equation}
    Combining these two gives
    \begin{equation}\label{eq: relation phi and g}
        \varphi=\frac{1-g^2}{g}
    \end{equation}
    for any choice $g=g_j$. 

    Consider $h-\re h(i,i)$ and note that its representing data are $(0,0,\mu)$. Thus, $h-\re h(i,i)$ satisfies the assumptions of Theorem~\ref{thm: structure formula representing measures}.
   Moreover, $q$ has no factor of the form $\tau-z$, since $A$ and $B$ are coprime. 
    Thus, the weight of $\mu$ associated with $g$ is
    \begin{equation*}
         \alpha(t) = \bigg|\frac{p(t,g(t))}{\frac{\partial q}{\partial w}(t,g(t))}\bigg| = \bigg|\frac{g(t)A(t)}{2A(t)g(t)+B(t)}\bigg|=\bigg|\frac{g(t)}{2g(t)+\varphi(t)}\bigg|,
    \end{equation*}
    which reduces to
    \begin{equation}\label{eq: weight associated with g}
        \alpha = \frac{g^2}{1+g^2}=\frac{ 1}{\mathcal{J}'(g)}
    \end{equation}
    after substituting \eqref{eq: relation phi and g} into the expression. Combining \eqref{eq: weight associated with g} with $g_1g_2\equiv -1$ yields the desired   identities \eqref{eq: boundary property}.

    \textbf{Part 2}
    By the structure of nonconstant one-variable real rational Herglotz-Nevanlinna functions (see Section~\ref{section: onevar_prelims}), there is a sequence $(t_n)\subset\R$ such that $\varphi(t_n)\to 0$ as $n\to\infty$. In fact, given a $\lambda$-null set in $\R$, we can find such a sequence away from it; this observation is relevant in part $3$, and also allows us to assume $(t_n)\cap(S\cup \Z(A))=\varnothing$. Choose a parametrizing function $g=g_j$, then
    \begin{equation*}
        q(t_n,g(t_n))=A(t_n)(g^2(t_n)+\varphi(t_n)g(t_n)-1) = 0 \quad \forall n\in\mathbb{N},
    \end{equation*}
    and hence $g(t_n)^2+\varphi(t_n)g(t_n)-1=0$ for all $n\in\mathbb{N}$. In particular,
    \begin{equation}\label{eq: additional boundary property}
        g^2(t_n)\to 1 \quad\text{as}\quad n\to\infty,
    \end{equation}
    which constitutes our additional property.
     
    \textbf{Part 3} 
    In this part, we show that  \eqref{eq: boundary property} and \eqref{eq: additional boundary property} imply that $\mu$ is extremal. To this end, let $f\in\ker\mathcal{T}_\mu$ be arbitrary. In accordance with the discussion preceding Theorem~\ref{thm: extremal measure}, we have
    \begin{equation}\label{eq: rewritten nevanlinna condition for part 2}
        (\mathcal{T}_\mu f)(z,w)= \pi \int_{\R\setminus S} \frac{1}{(t-z)^2} \sum_{j=1}^2\frac{f_j(t)\alpha_j(t)}{(g_j(t)-\overline{w})^2}\,dt = 0, \quad (z,w)\in\uppertwo.
    \end{equation}
    The aim is to show that $f$ is constant, i.e $(f_1,f_2)=(c,c)$ for some constant $c$. Extremality then follows from Theorem~\ref{thm: extremal condition}.
    
    Since $f$ is essentially bounded, there is some $M\in\R$ such that $|f_j|\leq M$ for $j=1,2$ and a.e $t\in\R$. Therefore, for $w=i\varepsilon$, $\varepsilon > 0$, we have for a.e $t\in\R$,
    \begin{align*}
        \bigg| \frac{1}{(t-z)^2} \sum_{j=1}^2\frac{f_j(t)\alpha_j(t)}{(g_j(t)-\overline{w})^2} \bigg| & \leq \frac{1}{|t-z|^2}  \sum_{j=1}^2 \bigg| \frac{f_j(t)\alpha_j(t)}{(g_j(t) +i\varepsilon)^2} \bigg| \\
         & \leq \frac{M}{|t-z|^2}  \sum_{j=1}^2 \frac{\alpha_j(t)}{|g_j(t)+i\varepsilon|^2} \\
        & \leq \frac{M}{|t-z|^2}  \sum_{j=1}^2 \frac{\alpha_j(t)}{g_j^2(t)+\varepsilon^2} \leq \frac{M}{|t-z|^2}  \sum_{j=1}^2 \frac{\alpha_j(t)}{g_j^2(t)}.
    \end{align*}
    Similarly, for $w=iR$, $R>0$, we have for a.e $t\in\R$,
     \begin{equation*}
        R^2\bigg| \frac{1}{(t-z)^2} \sum_{j=1}^2\frac{f_j(t)\alpha_j(t)}{(g_j(t)-\overline{w})^2} \bigg| \leq \frac{M}{|t-z|^2}  \sum_{j=1}^2 \frac{R^2\alpha_j(t)}{g_j^2(t)+R^2} \leq \frac{M}{|t-z|^2}  \sum_{j=1}^2 \alpha_j(t).
    \end{equation*}
    By \eqref{eq: boundary property}, the right-hand sides are integrable. Lebesgue's dominated convergence theorem thus yields
    \begin{align*}
        \lim_{\varepsilon\to 0^+}(\mathcal{T}_\mu f)(z,i\varepsilon) & = \pi \int_{\R\setminus S} \frac{1}{(t-z)^2}  \lim_{\varepsilon\to 0^+} \sum_{j=1}^2\frac{f_j(t)\alpha_j(t)}{(g_j(t)+i\varepsilon)^2}\,dt \\
        & = \pi\int_{\R\setminus S} \frac{1}{(t-z)^2} \sum_{j=1}^2\frac{f_j(t)\alpha_j(t)}{g_j^2(t)}\,dt,
    \end{align*}
    and
    \begin{align*}
        \lim_{R\to\infty}-R^2(\mathcal{T}_\mu f)(z,iR) & = \pi \int_{\R\setminus S} \frac{1}{(t-z)^2}  \lim_{R\to \infty} \sum_{j=1}^2\frac{-R^2f_j(t)\alpha_j(t)}{(g_j(t)+iR)^2}\,dt \\
        & =\pi \int_{\R\setminus S} \frac{1}{(t-z)^2} \sum_{j=1}^2f_j(t)\alpha_j(t)\,dt.
    \end{align*}
    In accordance with \eqref{eq: rewritten nevanlinna condition for part 2}, both limits are zero for all $z\in\upper$. By \eqref{eq: boundary property}, both sums are essentially bounded, and hence Lemma~\ref{lemma: H is identically zero} yields constants $c_0,c_\infty\in\R$ such that
    \begin{equation*}
        \sum_{j=1}^2\frac{f_j(t)\alpha_j(t)}{g_j^2(t)}=c_0 \quad\text{and}\quad  \sum_{j=1}^2f_j(t)\alpha_j(t)=c_\infty
    \end{equation*}
    for a.e $t\in\R$. This can be written in matrix form as  $M(f_1\, f_2)^T = (c_0 \, c_\infty)$, where
    \begin{equation*}
        M=
        \begin{bmatrix}
            \frac{\alpha_1}{g_1^2} & \frac{\alpha_2}{g_2^2} \\
            \alpha_1 & \alpha_2
        \end{bmatrix}.
    \end{equation*}
    From the additional  property \eqref{eq: additional boundary property}, we can conclude that $c_0=c_\infty$; indeed, subtracting the two equations yields
    \begin{equation}\label{eq: c0-cinfty}
        \abs{c_0-c_\infty } = \bigg|\alpha_1f_1\bigg(\frac{1}{g_1^2}-1\bigg)+\alpha_2f_2\bigg(\frac{1}{g_2^2}-1\bigg)\bigg|\lesssim \bigg|\frac{1}{g_1^2}-1 \bigg|+\bigg|\frac{1}{g_2^2}-1\bigg|,
    \end{equation}
    where the inequality holds a.e by boundedness a.e of each $\alpha_j$ (a consequence of \eqref{eq: boundary property} together with Corollary~\ref{cor: weights are stirctly positive}) and each $f_j$. Recall from part $2$ that we can choose a sequence $(t_n)\subset\R$ away from where the inequality may fail, such that for $j=1,2$, $g_j^2(t_n)\to 1$ as $n\to\infty$. Evaluating the right hand side of \eqref{eq: c0-cinfty} along this sequence and taking the limit $n\to\infty$ then yields the desired outcome $c_0=c_\infty$.

   Let $c$ be the common value of $c_0$ and $c_\infty$, and note that by  \eqref{eq: boundary property}, $M(1\, 1)^T = (1 \, 1)$. Thus
    \begin{equation}\label{eq: matrix equation}
        M
        \begin{bmatrix}
            f_1 \\
            f_2
        \end{bmatrix}
        =
         \begin{bmatrix}
            c_0 \\
            c_\infty
        \end{bmatrix}
        = 
        \begin{bmatrix}
            c \\
            c
        \end{bmatrix}
        = 
         M
        \begin{bmatrix}
             c \\
             c
        \end{bmatrix}.
    \end{equation}
    Consider 
    \begin{equation*}
        \det M = \alpha_1\alpha_2\bigg(\frac{1}{g_1^2}-\frac{1}{g_2^2}\bigg) = \alpha_1\alpha_2 \bigg(\frac{1}{g_1^2}-g_1^2\bigg) = \frac{\alpha_1\alpha_2}{g_1^2} (1-g_1)(1+g_1)(1+g_1^2).
    \end{equation*}
    Each factor is nonzero a.e; indeed, the weights $\alpha_1$, $\alpha_2$ are strictly positive on $\R\setminus S$, as per Corollary~\ref{cor: weights are stirctly positive}, and since $\varphi$ is nonconstant, \eqref{eq: relation phi and g} implies that the remaining factors are nonzero a.e. Consequently, $M$ is invertible a.e, and hence $(f_1,f_2)=(c,c)$ solves \eqref{eq: matrix equation} for a.e $t\in\R$. Since $f\in\ker\mathcal{T}_\mu$ was arbitrary, the conclusion follows from Theorem~\ref{thm: extremal condition}.
\end{proof}

\begin{remark}\label{rem: graphs never intersect}
    The  identity $g_1g_2\equiv -1$ derived in part $1$ shows that the structure of the function forces the supporting curves of its representing measure to be disjoint; indeed, there is no point $t\in\R\setminus S$ such that $g_1(t)=g_2(t)$, because then $g_1(t)g_2(t)=g_1^2(t)$ which is nonnegative.
\end{remark}

\begin{remark}\label{rem: weight of extremal function in special class}
    From the proof we find that the weight $\alpha$ associated with the parametrizing function $g$ is given by $1/\mathcal{J}'(g)$, where $\mathcal{J}(w):=w-1/w$. Observe that the weight  depends only on  $g$. Thus,  the real zero set of the denominator  completely determines the representing measure of a real rational Herglotz-Nevanlinna function of the form \eqref{eq: general form of extremal function}. Corollary~\ref{cor: support = zero set} identifies this zero set with the support of the measure, and hence the support determines the measure. 
\end{remark}

The preceding proof shows that functions of the form \eqref{eq: general form of extremal function} satisfy a specific set of  identities. The following shows that these identities essentially characterize the family.

\begin{proposition}\label{prop: boundary properties of normal form}
    Let $h=p/q$ be a  real rational Herglotz-Nevanlinna function of bidegree $(m,2)$, $m \geq 1$, with constant polynomial part such that $q$ and $\tau-z$ are coprime for all $\tau\in\R$. Then $h$  agrees with a function of the form \eqref{eq: general form of extremal function} up to addition by a real constant, with some nonconstant real rational HN-function $\varphi$, if and only if it satisfies \eqref{eq: boundary property} and $g_1g_2\equiv -1$.
\end{proposition}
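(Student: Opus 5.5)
The forward direction is essentially Part 1 of the proof of Theorem~\ref{thm: extremal measure}: if $h = c + \left(-1/(\varphi(z)+w-1/w)\right)$ with $c\in\R$, then $h$ and $h-c$ have the same denominator $q$, the same parametrizing functions $g_1,g_2$, and the same measure $\mu$. Vieta's formulas applied to $q = Aw^2+Bw-A$ give $g_1g_2\equiv -1$. The weight computation gives $\alpha_j = g_j^2/(1+g_j^2)$, and combined with $g_1g_2\equiv-1$ this yields \eqref{eq: boundary property}. The only point I would check is that $q$ has no factor $\tau - z$; this holds because $A,B$ are coprime.

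For the converse, write $q(z,w) = A(z)w^2 + B(z)w + C(z)$ with $A,B,C\in\R[z]$. By Lemma~\ref{lemma: properties of RR functions}(ii), $q$ has no factor depending only on $z$, so $\gcd(A,B,C)=1$.

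\textbf{Step 1 (the denominator).} On $\R\setminus S$ we have $A\neq 0$, and Vieta gives $g_1g_2 = C/A$ and $g_1+g_2=-B/A$. The hypothesis $g_1g_2\equiv -1$ forces $C=-A$ as polynomials, since they agree on a cofinite set. Hence $q = A(w^2-1) + Bw$. Putting $\varphi := B/A$, the polynomial $q$ is $A\cdot(\varphi + w - 1/w)\cdot w$. Coprimality of $A$ and $B$ follows from $\gcd(A,B,C)=1$ with $C=-A$.

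\textbf{Step 2 (the numerator).} Since the degree of $p$ in $w$ is at most $2$, I would show $h = c - wA/q$ for some $c\in\R$. To do this, compare the measure of $h$ (data $(\re h(i,i),0,\mu)$, with $b=0$ because the polynomial part is constant) with the measure of $h_0:=-wA/q$. By Theorem~\ref{thm: structure formula representing measures}, the weight of $h$ along $g_j$ is $|p(t,g_j)/\partial_w q(t,g_j)|$, while that of $h_0$ is $g_j^2/(1+g_j^2)$, as computed in Part 1. Identity \eqref{eq: boundary property} says $\alpha_1+\alpha_2=1$ and $\alpha_1/g_1^2+\alpha_2/g_2^2=1$. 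Using $g_2=-1/g_1$, this is a $2\times2$ linear system in $(\alpha_1,\alpha_2)$ with coefficient matrix $\left[\begin{smallmatrix}1&1\\ g_1^{-2}& g_1^{2}\end{smallmatrix}\right]$. Its determinant $g_1^2-g_1^{-2}$ is nonzero a.e., because $g_1^2\equiv1$ would force $\varphi\equiv0$ via $\varphi = (1-g^2)/g$.

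Solving gives $\alpha_1 = g_1^2/(1+g_1^2)$, the same as for $h_0$, and likewise for $\alpha_2$. So the measures coincide, and by uniqueness of the representing data $h - h_0$ is a real constant.

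It remains to show that $\varphi$ is a nonconstant real rational HN-function, so that $h_0$ is of the form \eqref{eq: general form of extremal function}. For nonconstancy: $\varphi$ constant would make $B=\kappa A$, hence $A$ divides $q$, so $A$ is constant; then $q$ is independent of $z$, contradicting $m\ge1$. For the Herglotz property I would argue directly from $q$. For $z\in\upper$ the roots in $w$ of $w^2+\varphi(z)w-1$ must lie outside $\upper$, since $q$ has no zeros in $\uppertwo$. These roots are $w$ and $-1/w$, and $\mathcal{J}(w)=w-1/w$ maps $\overline{\upper}^c$ onto $\overline{\upper}^{c}$. Hence $-\varphi(z) = \mathcal{J}(w)$ has $\im\le0$, i.e.\ $\im\varphi(z)\ge0$, possibly after handling boundary cases by continuity. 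Since $\varphi$ is nonconstant, it is then a holomorphic HN-function on $\upper$.

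\textbf{Main obstacle.} I expect the hardest point to be the measure-matching in Step 2. One must make sure the identity of measures really forces $h=h_0+c$; this requires the absence of vertical parts and $b=0$ for both functions, and the branch bookkeeping when identifying $g_j$ for $h$ and $h_0$ — they share $q$, so they share $g_j$. A more algebraic alternative would be to compare $p(t,g_j(t))$ with $-g_jA$ directly via the slice functions of Lemma~\ref{lemma: properties of RR functions}(i).
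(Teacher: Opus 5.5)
Your proposal is correct and follows the paper's proof essentially step for step. Vieta's formulas with $g_1g_2\equiv -1$ give $C=-A$, the $\mathcal{J}$-argument gives $\im\varphi\geq 0$, and nonconstancy comes from Lemma~\ref{lemma: properties of RR functions}(ii). The weights are then matched (your explicit $2\times 2$ solve is the paper's ``quick calculation''), and uniqueness of the representing data finishes the proof.

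One detail needs to be supplied: $A$ has no zeros in $\upper$, so $\varphi=B/A$ is holomorphic there. The paper gets this from Hurwitz's theorem applied to $q(z,i\varepsilon)\to -A(z)$. Alternatively, a pole of $\varphi$ in $\upper$ would contradict $\im\varphi\geq 0$ nearby. This Herglotz property should also be established before you apply the structure formula to $h_0$.
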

\begin{proof}
    Part $1$ of the proof of Theorem~\ref{thm: extremal measure} shows that $h$ satisfies the  identities if it is of the form \eqref{eq: general form of extremal function}. We show the converse here.

    Assume $h$ satisfies \eqref{eq: boundary property}  and $g_1g_2\equiv -1$. Since the degree of $h$ in $w$ is two, we have $q(z,w)=A(z)w^2+B(z)w+C(z)$ for some $A,B,C\in\R[z]$. By Vieta's formulas, the parametrizing functions $g_1,g_2$ from Lemma~\ref{lemma: parametrization of zero set} satisfy
    \begin{equation*}
        g_1+g_2=-\frac{B}{A} \quad\text{and}\quad g_1g_2=\frac{C}{A}.
    \end{equation*}
    We obtain $C=-A$ from  $g_1g_2\equiv -1$. It follows that $q(z,i\varepsilon)$ converges uniformly on compact subsets to $-A(z)$ as $\varepsilon\to 0^+$, and hence Hurwitz's theorem implies that $A$ is nonzero on $\upper$. This allows us to write
    \begin{equation*}
        q(z,w)=A(z)w(\varphi(z)+\mathcal{J}(w)), \quad (z,w)\in\uppertwo,
    \end{equation*}
    where $\varphi:=\frac{B}{A}$ is holomorphic on $\upper$ and $\mathcal{J}(w):=w-1/w$. Since $q\neq 0$ on $\uppertwo$, we have $\varphi(z)+\mathcal{J}(w)\neq 0$ on $\uppertwo$. This implies that $\im\varphi\geq 0$ on $\upper$, since $\mathcal{J}(w)\in\upper$ whenever $w\in\upper$ and $\mathcal{J}$ is surjective on $\upper$. It follows that $\varphi$ is a one-variable real rational Herglotz-Nevanlinna function. To see that $\varphi$ is nonconstant, assume the contrary for a contradiction: then $B=cA$ for some $c\in\R$, which yields $q(z,w)=A(z)(w^2+cw-1)$. By assumption, $m\geq 1$ and $q$ is coprime with factors of the form $\tau-z$, and hence Lemma~\ref{lemma: properties of RR functions}(ii) forces $A$ to be constant. As such, $B=cA$ is also constant, which contradicts  $m\geq 1$. In conclusion, $\varphi$ is nonconstant. 
    
    From the above we conclude that $q$ has the exact same structure as the denominator of some function $\tilde{h}$ of the form \eqref{eq: general form of extremal function}. In particular, the representing measures of $h$ and $\tilde{h}$ have the same support. Furthermore, a quick calculation shows that \eqref{eq: boundary property} and $g_1g_2\equiv -1$ together imply that the weight $\alpha = \alpha_j$ associated with $h$ satisfies \eqref{eq: weight associated with g}, meaning that the weights of the two measures agree. Denote by $h_k$ and $\tilde{h}_k$ the polynomial parts of $h$ and $\tilde{h}$, respectively. It follows from Theorem~\ref{thm: structure formula representing measures} that $h-h_k$ and $\tilde{h}-\tilde{h}_k$ have the same representing measure, and hence  uniqueness of the representing data yields $h=\tilde{h}$ up to addition by a real constant. 
\end{proof}

\section{\bfseries Counterexamples to Knese's conjecture}\label{section: conjecture}

In this last section we connect our results to a conjecture by Knese \cite[Conjecture 1.7]{KneseConjecturePaper}. Since it is formulated for functions defined in the setting of positive real functions we first need to introduce some notation.

Consider a bidegree $(m,n)$ polynomial $p\in\C[z,w]$. The \emph{reflection} of $p$ is defined as the polynomial
\begin{equation*}
    \tilde{p}(z,w):=z^{m}w^{n}\overline{p(1/\overline{z},1/\overline{w})}.
\end{equation*}
A polynomial $p\in\C[z,w]$ is \emph{scattering stable} if it is nonzero on $\D^2$ and coprime with the reflection of $p$. A bidegree $(m,n)$ scattering stable polynomial $p$ is $\mathbb{T}^2$-\emph{saturated} if it has $2mn$ zeros on $\mathbb{T}^2$ in common with its reflection (counting with intersection multiplicities). Let \emph{RHP} denote the open right half plane in $\C$, and let $\text{\emph{PRP}}_2$ be the set of holomorphic functions $f:\mathbb{D}^2\to \text{\emph{RHP}}$ such that $f(0)=1$. This set  is obviously convex and the conjecture concerns certain extreme elements.

\begin{conjecture}\label{conj: Knese}
     If $p\in\C[z,w]$ is scattering stable and if $(p+\tilde{p})/(p-\tilde{p})$ is an extreme point of $\text{\emph{PRP}}_2$, then $p$ is $\mathbb{T}^2$-saturated and $p-\tilde{p}$ is irreducible.
\end{conjecture}

We will show that the irreducibility aspect of Conjecture~\ref{conj: Knese} is false. We first construct a counterexample from the function $H$ in \eqref{eq: final example RR function}, and then characterize when the family from Theorem~\ref{thm: extremal measure} yields further counterexamples.

\subsection{A first counterexample}\label{section: first_counterexample}

Let $\gamma$ be the Cayley map $z\mapsto i(1+z)/(1-z)$, mapping $\D\to\upper$.  Then with $H$ from \eqref{eq: final example RR function}, define the function
\begin{equation*}
    f(z,w) :=-4iH(\gamma(z), \gamma(w)) = \frac{(1-z^2)(1-w^2)}{1-z^2w^2},
\end{equation*}
which belongs to $\text{\emph{PRP}}_2$. With $p(z,w):=2-z^2-w^2$ and its reflection $\tilde{p}(z,w)=2z^2w^2-z^2-w^2$ the function $f$ can be written as $(p+\tilde{p})/(p-\tilde{p})$. Note that $p$ is nonzero on $\mathbb{D}^2$ since
\begin{equation*}
    \abs{z^2+w^2}\leq \abs{z}^2+\abs{w}^2 < 2
\end{equation*}
whenever $\abs{z},\abs{w}<1$. Moreover, $p$ and $\tilde{p}$ are coprime; indeed, the polynomials
\begin{equation*}
    (z,w)\mapsto 2-z-w \quad\text{and}\quad (z,w)\mapsto 2zw-z-w
\end{equation*}
have a single common zero $(1,1)$, and hence the common zeros of $p$ and $\tilde{p}$ are the finitely many points $(\pm 1,\pm 1)$. As such, $p$ is scattering stable. 

In the following we show that $f$ is an extreme point in $\text{\emph{PRP}}_2$. Write $f=(f_1+f_2)/2$ for some $f_1,f_2\in \text{\emph{PRP}}_2$. For each $j$, the composition $if_j(\gamma^{-1},\gamma^{-1})/4$ defines a \HNf{} $H_j$. It follows that
\begin{equation*}
    \frac{H_1+H_2}{2} = \frac{\frac{i}{4}f_1(\gamma^{-1},\gamma^{-1})+\frac{i}{4}f_2(\gamma^{-1},\gamma^{-1})}{2} = \frac{i}{4}f(\gamma^{-1},\gamma^{-1}) = H,
\end{equation*}
and hence $H=(H_1+H_2)/2$. By the extremality of $h$, this yields $H_1=c+d_1 h$ and $H_2=-c + d_2h$ with $c\in\R$ and $ d_1,d_2\geq 0$. We thus find that $f_1 = -4i H_1 (\gamma,\gamma)= -4ci + d_1f$, and similarly $f_2 = 4ci + d_2f$. From $f_1(0)=f(0)=1$ we obtain $-4ci+d_1=1$, which by comparison of real and imaginary parts yields $c=0$ and $d_1=1$. Similarly, $f_2(0)=1$ yields $d_2=1$. Consequently, $f_1 = f_2 = f$.

In conclusion, $p$ is scattering stable and $f=(p+\tilde{p})/(p-\tilde{p})$ is an extreme point of $\text{\emph{PRP}}_2$. As such, all the assumptions in Conjecture~\ref{conj: Knese} are satisfied. The saturation aspect does hold in this example; indeed, $p$ is $\mathbb{T}^2$-saturated, which follows from saturation of $2-z-w$ (considered in \cite{KneseConjecturePaper}) under the substitution $z\mapsto z^2$ and $w\mapsto w^2$. However, more importantly, $p-\tilde{p}$ is not irreducible:
\begin{equation*}
    p-\tilde{p} = 2-2z^2w^2=2(1-zw)(1+zw).
\end{equation*}
A counterexample to the conjecture has thus been constructed.

\subsection{Characterizations of reducibility and a family of counterexamples}\label{section: reducibility}

In what follows we generalize this example and characterize when the denominator of the extremal function \eqref{eq: general form of extremal function} from Theorem~\ref{thm: extremal measure} is reducible.
\begin{proposition}\label{prop: reducibility}
    Let $h=p/q$ be as in Theorem~\ref{thm: extremal measure}. Then the following are equivalent:
    \begin{enumerate}[label=(\roman*)]
        \item $q$ is reducible over $\R$, or equivalently over $\C$,
        \item $\varphi^2+4$ is a square in $\R(z)$,
        \item There exists a nonconstant one-variable real rational Herglotz-Nevanlinna function $U$ such that $\varphi = \mathcal{J}\circ U$,
        \item There exists a nonconstant one-variable real rational Herglotz-Nevanlinna function $U$ such that the parametrizing functions $g_1,g_2$ from Lemma~\ref{lemma: parametrization of zero set} are given by $\{g_1,g_2\}=\{-U,1/U\}$, up to relabelling.
    \end{enumerate}
\end{proposition}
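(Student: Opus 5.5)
The denominator is $q(z,w)=A(z)w^2+B(z)w-A(z)$ with $A,B\in\R[z]$ coprime and $\varphi=B/A$, exactly as in Part 1 of the proof of Theorem~\ref{thm: extremal measure}. I would prove the cycle (i)$\Rightarrow$(ii)$\Rightarrow$(iii)$\Rightarrow$(iv)$\Rightarrow$(i). Throughout, $q$ is regarded as a quadratic polynomial in $w$ over the field $\R(z)$.

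\textbf{(i)$\Rightarrow$(ii).} Since $\gcd(A,B)=1$, the polynomial $q$ is primitive in $\R[z][w]$, because any common divisor of its coefficients divides both $A$ and $B$. By Gauss's lemma, a nontrivial factorization of $q$ must therefore consist of two factors of degree one in $w$. This means $q$ has a root in $\R(z)$, so its discriminant $B^2+4A^2=A^2(\varphi^2+4)$ is a square in $\R(z)$, and hence so is $\varphi^2+4$.

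For the parenthetical claim that reducibility over $\R$ and over $\C$ agree, I would use the same argument with $\C(z)$ in place of $\R(z)$. It then remains to show that if $\varphi^2+4=\rho^2$ with $\rho\in\C(z)$, then $\rho$ can be taken real. Indeed, $\rho^2$ has real coefficients, so $\rho^*=\pm\rho$. The case $\rho^*=-\rho$ would make $\rho$ purely imaginary on $\R$, so $\varphi^2+4=\rho^2\le 0$ on $\R$, which is impossible because the left side is at least $4$.

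\textbf{(ii)$\Rightarrow$(iii).} Write $\varphi^2+4=\rho^2$ with $\rho\in\R(z)$ and set
\[
U:=\frac{\varphi+\rho}{2},\qquad\text{so that}\qquad -\frac1U=\frac{\varphi-\rho}{2}.
\]
Then $U-1/U=\varphi$, that is, $\varphi=\mathcal{J}\circ U$, and $U$ is nonconstant because $\varphi$ is. The real work is showing that $U$ can be chosen Herglotz-Nevanlinna, possibly after replacing $\rho$ by $-\rho$; I expect this to be the main obstacle.

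My approach to that step is as follows. On $\upper$ we have $\im\varphi\ge0$, and the equation $\mathcal{J}(u)=\varphi(z)$, i.e. $u^2-\varphi u-1=0$, has two roots $u$ and $-1/u$. If $\varphi(z)\in\upper$, then exactly one root lies in $\upper$: the product of the roots is $-1$, so if $u\in\upper$ then $-1/u\in\upper$ as well, and $\mathcal{J}$ is two-to-one, mapping $\upper\to\upper$ and $-\upper\to-\upper$ with no real preimage of a nonreal value. The root $u=0$ is excluded since the product is $-1$.

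The set where $\im U>0$ is then open and closed in $\{z\in\upper:\varphi(z)\in\upper\}$, which is all of $\upper$ because $\varphi$ is nonconstant and Herglotz. Connectedness then shows that one of the two choices of sign of $\rho$ gives $U(\upper)\subset\upper$ throughout. Finally, $U$ is real rational because $\rho$ has real coefficients.

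\textbf{(iii)$\Rightarrow$(iv).} If $\varphi=U-1/U$, then $w^2+\varphi w-1=(w+U)(w-1/U)$. Its roots $-U(t)$ and $1/U(t)$ are exactly the values of $g_1,g_2$ on $\R\setminus S$, up to labelling. Because both sides are real-analytic, the labelling is locally constant, and on each interval of $\R\setminus S$ it can be fixed.

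\textbf{(iv)$\Rightarrow$(i).} By Vieta's formulas, $g_1+g_2=-\varphi$, so $-U+1/U=-\varphi$ a.e., which gives $\varphi=\mathcal{J}\circ U$ as rational functions. Hence
\[
A w^2+Bw-A=A(w+U)(w-1/U).
\]
Clearing denominators gives a factorization of $q$ into two factors of degree one in $w$. Each factor is nonconstant since it contains $w$, so $q$ is reducible over $\R$.
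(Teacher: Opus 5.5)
Your cycle (i)$\Rightarrow$(ii)$\Rightarrow$(iii)$\Rightarrow$(iv)$\Rightarrow$(i) is the paper's. It uses the same choice $U=(\varphi+\rho)/2$ and the same $\rho^*=\pm\rho$ argument for passing from $\mathbb{C}(z)$ to $\mathbb{R}(z)$. In (i)$\Rightarrow$(ii), you use primitivity of $q$ (from $\gcd(A,B)=1$) where the paper appeals to Lemma~\ref{lemma: properties of RR functions}(ii); this is an equivalent substitute.

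There is, however, a false claim in (ii)$\Rightarrow$(iii), the step you flagged as the main obstacle. It is not true that exactly one root of $u^2-\varphi(z)u-1=0$ lies in $\mathbb{H}$ when $\varphi(z)\in\mathbb{H}$. The roots are $u$ and $-1/u$, and $\im(-1/u)=\im u/|u|^2$ has the same sign as $\im u$. So either both roots lie in $\mathbb{H}$, both lie in $-\mathbb{H}$, or both are real. Since their sum is $\varphi(z)\in\mathbb{H}$, both lie in $\mathbb{H}$. Your own supporting remark (``if $u\in\mathbb{H}$ then $-1/u\in\mathbb{H}$'') already says this and contradicts ``exactly one''. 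Your open--closed partition argument therefore rests on a wrong premise. The error is harmless, though: no sign choice and no connectedness argument are needed, because either choice of $\rho$ gives $U(\mathbb{H})\subset\mathbb{H}$. The paper gets this in one line from $\im\mathcal{J}(U)=\im U\,(1+|U|^{-2})$ together with $\im\mathcal{J}(U)=\im\varphi\ge 0$. Here $U$ has no zeros or poles in $\mathbb{H}$, since either would produce a pole of the holomorphic function $\varphi=U-1/U$ there.

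Two smaller points are glossed over. In (iii)$\Rightarrow$(iv), you should justify that $-U(t)$ and $1/U(t)$ are finite and are the roots of $q(t,\cdot)$ for every $t\in\mathbb{R}\setminus S$. Write $U=P/Q$ in lowest terms. Then $\varphi=(P^2-Q^2)/(PQ)$ is reduced, so $A$ is a nonzero constant multiple of $PQ$. The real zeros of the leading coefficient $A$ lie in $S_1\subset S$. Hence on $\mathbb{R}\setminus S$, $U$ is finite and nonzero and $q(t,w)=A(t)(w+U(t))(w-1/U(t))$ with $A(t)\neq 0$.

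The same fact $A=c\,PQ$ is what makes your ``clearing denominators'' in (iv)$\Rightarrow$(i) stay inside $\mathbb{R}[z,w]$, giving $q=c\,(Qw+P)(Pw-Q)$. Alternatively, apply Gauss's lemma to the primitive polynomial $q$, which has the root $-U\in\mathbb{R}(z)$. With these repairs the argument is complete.
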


\begin{proof}
    We show $(i)\Rightarrow (ii) \Rightarrow (iii) \Rightarrow (iv) \Rightarrow (i)$. In the first implication, we will in fact only assume that $q$ is reducible over $\C$. Together with the final implication, which concludes that $q$ is reducible over $\R$, this will also establish the equivalence of reducibility of $q$ over $\R$ and $\C$.

    $\mathbf{(i)\Rightarrow (ii)}$: Assume $q$ is reducible over $\C$. Recall from part one of the proof of Theorem~\ref{thm: extremal measure} that we can write
    \begin{equation*}
        q(z,w)=A(z)w^2+B(z)w-A(z) = A(z)(w^2+\varphi(z)w-1)
    \end{equation*}
    for some $A,B\in\R[z]$. 
    The degree in $w$ of each factor in any nontrivial factorization of $q$ is $1$; otherwise one factor would depend only on $z$, which cannot happen by Lemma~\ref{lemma: properties of RR functions}(ii). 
    
    Thus, $w^2+\varphi(z)w-1$ has to be reducible in $\C(z)[w]$, and hence its discriminant $\varphi^2+4$ is a square in $\C(z)$. Let $R\in\C(z)$ such that $\varphi^2+4=R^2$, then either $R=\overline{R}$ or $R=-\overline{R}$. In the first case, $R\in\R(z)$. The second case cannot happen because then $R=iS$ for some $S\in\R(z)$, which yields a contradiction since then $\varphi^2+4=-S^2>0$ at real values. Consequently, $\varphi^2+4$ is a square in $\R(z)$. 
    
    $\mathbf{(ii)\Rightarrow (iii)}$: Since $\varphi^2+4$ is a square in $\R(z)$, there is some $R\in\R(z)$ such that $\varphi^2+4=R^2$. Rewrite this as $(R-\varphi)(R+\varphi)=4$ and let $U:=(R+\varphi)/2$. Then $1/U=(R-\varphi)/2$, which yields
    \begin{equation*}
        \mathcal{J}(U) = U-\frac{1}{U} = \frac{R+\varphi}{2} - \frac{R-\varphi}{2} = \varphi.
    \end{equation*}
    It remains to show that $U$ is a nonconstant real rational Herglotz-Nevanlinna function: all desired properties but $\im U\geq 0$ on $\upper$ are immediate from $\mathcal{J}(U)=\varphi$, and $\im U\geq 0$ follows from
    \begin{equation*}
        \im \mathcal{J}(U)=\im \bigg(U-\frac{1}{U}\bigg)=\im\bigg( U-\frac{\overline{U}}{|U|^2}\bigg)=\im U\bigg(1+\frac{1}{|U|^2}\bigg)
    \end{equation*}
    together with  $ \im \mathcal{J}(U)=\im \varphi \geq 0$.

    $\mathbf{(iii)\Rightarrow (iv)}$: Since $U$ is real rational and nonconstant, there are  coprime $P,Q\in \R[z]$ such that $U=P/Q$. From $\mathcal{J}(U)=\varphi$ we obtain $\varphi=(P^2-Q^2)/PQ$, which is in reduced form. Recall from part one of Theorem~\ref{thm: extremal measure} that $\varphi=B/A$, where $A,B\in\R[z]$ are coprime such that  
    \begin{equation*}
        q(z,w)=A(z)w^2+B(z)w-A(z) = A(z)(w^2+\varphi(z)w-1).
    \end{equation*}
    It follows that $A$ is a nonzero constant multiple of $PQ$, and hence $A$ vanishes precisely where $U$ either vanishes or has a pole. Since $A$ is the leading coefficient of $q$ when $q$ is written as a polynomial in $w$, its zero set is contained in the exceptional set $S$ (recall $S_1\subset S$ from Lemma~\ref{lemma: properties of RR functions}(iii)). Therefore, for $t\in\R\setminus S$ and a parametrizing function $g=g_j$, the desired conclusion is obtained from
    \begin{align*}
        q(t,g(t)) & =A(t)(g^2(t)+\varphi(t)g(t)-1)\\
        & = A(t)\bigg( g^2(t)+\bigg(U(t)-\frac{1}{U(t)}\bigg)g(t)-1\bigg) = A(t) (g(t)+U(t))\bigg(g(t)-\frac{1}{U(t)}\bigg)=0.
    \end{align*}

    $\mathbf{(iv)\Rightarrow (i)}$: This implication is routed via (iii). Start by assuming $\{g_1,g_2\}=\{-U,1/U\}$. Then, by Vieta's formula $g_1+g_2=-\varphi$, we find that
    \begin{equation*}
        \varphi = -\bigg(-U+\frac{1}{U}\bigg)=\mathcal{J}(U).
    \end{equation*}
    With this we have
        \begin{equation*}
        h(z,w)=-\frac{1}{\mathcal{J}(U)+\mathcal{J}(w)} = -\frac{U(z)w}{(U(z)+w)(U(z)w-1)}.
    \end{equation*}
    Since $U$ is real rational and nonconstant, there are coprime $P,Q\in \R[z]$ such that $U=P/Q$. Clearing denominators in the above through multiplication by $Q^2$ results in
    \begin{equation*}
         h(z,w)=-\frac{P(z)Q(z)w}{(P(z)+wQ(z))(P(z)w-Q(z))}.
    \end{equation*}
    At the zeros $w=-\frac{P}{Q}$ and $w=\frac{Q}{P}$ of the denominator, the numerator evaluates to $P^2\notequiv 0$ and $-Q^2\notequiv 0$, respectively. Therefore, none of the factors $P+wQ$ and $Pw-Q$ divide the numerator. These factors are irreducible because any factorization of either one would have a factor independent of $w$, which then would divide both $P$ and $Q$, contradicting coprimality of $P$ and $Q$. As such, the above is in reduced form. It follows that $q$ is a nonzero constant multiple of $(P+wQ)(Pw-Q)$, which clearly is reducible over $\R$.
\end{proof}

\begin{remark}\label{rem: general extremal measure}
    If $h$ is as in Theorem~\ref{thm: extremal measure} such that any of the conditions in Proposition~\ref{prop: reducibility} hold, then its representing measure $\mu$ is extremal and given by
    \begin{equation*}
        \mu = \pi (t,-U(t))_* \bigg( \frac{U^2}{1+U^2}\lambda|_{\R\setminus S}\bigg) + \pi  (t,1/U(t))_* \bigg( \frac{1}{1+U^2}\lambda|_{\R\setminus S}\bigg),
    \end{equation*}
    where $U$ is a nonconstant one-variable real rational Herglotz-Nevanlinna function related to the parametrizing functions $g_1,g_2$ of the denominator of $h$ through $g_1=-U$ and $g_2=1/U$ on $\R\setminus S$. As described in Remark~\ref{rem: graphs never intersect}, the two parametrizing functions never intersect, and hence the measure  is a sum of two positive Borel measures  with disjoint supporting curves. They both satisfy the growth condition \eqref{eq: growth condition Nevanlinna measure}, but they do not individually satisfy the Nevanlinna condition \eqref{eq: nevanlinna condition two variables}. Interestingly, however, is that their disjoint supporting curves do support Nevanlinna measures, e.g. $\pi (t,-U(t))_*\lambda$ and $\pi (t,1/U(t))_*\lambda$. This observation matches with what we already know about the example \eqref{eq: final example RR function}.
\end{remark}

\begin{corollary}
    Let $h$ be as in Theorem~\ref{thm: extremal measure}  with zero polynomial part. Then $h$ produces a counterexample to the irreducibility aspect of Conjecture~\ref{conj: Knese} precisely when any of the conditions in Proposition~\ref{prop: reducibility} hold. 
\end{corollary}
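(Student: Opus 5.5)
The plan is to transfer $h$ to the bidisc exactly as in Subsection~\ref{section: first_counterexample}, and then check the hypotheses of Conjecture~\ref{conj: Knese} one at a time.

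\emph{Transfer.} With $\gamma$ the Cayley map, set $f(z,w):=-4i\,h(\gamma(z),\gamma(w))/\kappa$. Here $\kappa>0$ is the normalising constant making $f(0,0)=1$. It exists because $h$ has zero polynomial part and is nonconstant, so $-4ih(i,i)$ has positive real part; if it is not real, subtract the real constant $\re h(i,i)$ first, which does not change extremality.

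\emph{Extremality in $\text{PRP}_2$.} This step copies the argument of Subsection~\ref{section: first_counterexample} verbatim. A splitting $f=(f_1+f_2)/2$ pulls back to a splitting of $h$ into Herglotz-Nevanlinna functions. Theorem~\ref{thm: extremal measure} then forces $f_j=\mp 4ci+d_jf$, and the normalisation at $0$ gives $f_1=f_2=f$.

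\emph{Writing $f$ as $(p+\tilde p)/(p-\tilde p)$.}
\begin{enumerate}
\item Write $h=P/Q$ in reduced form, with $Q=q$ as in Theorem~\ref{thm: extremal measure}.
\item Clear the Cayley denominators by multiplying numerator and denominator by $(1-z)^{a}(1-w)^{b}$, where $(a,b)$ is the bidegree of $q$. This gives $f=N/D$ with polynomials $N,D$ in $(z,w)$.
\item The real coefficients, together with the fact that $\gamma$ sends $1/\bar z$ to $\overline{\gamma(z)}$, give symmetry properties: $D$ is, up to a unimodular constant, anti-self-reflective, and $N$ is self-reflective.
\item Set $p:=(N+D)/2$. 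Then $\tilde p=(N-D)/2$ up to normalisation, so $N=p+\tilde p$ and $D=p-\tilde p$.
\end{enumerate}

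\emph{Scattering stability.} The polynomial $p$ is nonzero on $\D^2$ because $\re f>0$ there: a common zero of $N$ and $D$ inside $\D^2$ would contradict coprimality, and $N=-D$ would give $f=-1$. Coprimality of $p$ and $\tilde p$ is equivalent to coprimality of $N$ and $D$. That follows from coprimality of $P$ and $q$ after the Möbius substitution, provided no spurious factors $(1-z)$ or $(1-w)$ appear. This needs a short degree bookkeeping, using that the bidegree of $h$ equals that of $q$.

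\emph{The equivalence.} The key point is that $p-\tilde p=D$ is, up to a constant, the transform of $q$ under $(z,w)\mapsto(\gamma(z),\gamma(w))$ with denominators cleared. This transform is a ring isomorphism of rational function fields that preserves bidegree. So $D$ is reducible exactly when $q$ is, the only caveat being factors $1-z$ or $1-w$, which the bookkeeping excludes. Hence extremality and the other hypotheses always hold, and irreducibility of $p-\tilde p$ fails exactly when $q$ is reducible, i.e.\ under any condition of Proposition~\ref{prop: reducibility}.

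The main obstacle is the bookkeeping about possible factors $(1-z)$ and $(1-w)$, which correspond to the behaviour of $h$ at infinity. I would handle it through the fact that the bidegree of $h$ equals that of $q$.
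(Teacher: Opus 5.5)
Your argument is correct in outline, but it reaches the form $(p+\tilde p)/(p-\tilde p)$ by a different route from the paper.

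\textbf{The two routes.} The paper passes to the rational inner function $F=(f-1)/(f+1)$ and invokes the Rudin--Stout structure theorem to write $F=z^{k_1}w^{k_2}\tilde r/r$ with $r$ scattering stable. It then argues $k_1=k_2=0$. Finally it identifies the reduced denominator $Q=(1-z)^m(1-w)^2q(\gamma,\gamma)$ of $f$ with $r-\tilde r$ up to a constant, because $r\pm\tilde r$ are coprime.

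You instead build $p$ explicitly. Since $P$ and $q$ have real coefficients and $\gamma(1/\bar z)=\overline{\gamma(z)}$, the reflections in bidegree $(m,2)$ satisfy $\tilde D=(-1)^mD$ and $\tilde N=(-1)^{m+1}N$. The opposite signs come from the factor $-4i/\kappa$ being purely imaginary. So a single constant $\beta\in\{1,i\}$ makes $\beta N$ self-reflective and $\beta D$ anti-self-reflective, and $p=\beta(N+D)/2$ works. This avoids the structure theorem and gives an explicit $p$, which is the paper's $r$ up to a constant. The price is that stability, coprimality and the bidegree of $p$ must be checked by hand. Your coprimality argument and your transfer of (ir)reducibility between $q$ and $D$ are fine: $D$ has no factor $1-z$ or $1-w$ because $q$ has exact bidegree $(m,2)$.

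\textbf{Two details need repair.}

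First, the stability step. Coprime polynomials in two variables can share isolated zeros (e.g.\ $z$ and $w$). So a common zero of $N$ and $D$ in $\mathbb{D}^2$ does not contradict coprimality. Argue instead that $D$ itself is zero-free on $\mathbb{D}^2$: $q$ is zero-free on $\mathbb{H}^2$, and $1-z,1-w\neq0$ on $\mathbb{D}$. Then $p=\beta D(1+f)/2\neq 0$ there, because $\re f>0$.

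Second, the reflection in Conjecture~\ref{conj: Knese} is taken with respect to the actual bidegree of $p$. You must therefore check that $p$ has bidegree exactly $(m,2)$. Otherwise $\beta(N-D)/2$ equals $\tilde p$ times a power of $z$ or $w$, and $(p+\tilde p)/(p-\tilde p)\neq f$.
\begin{itemize}
\item A drop in the $z$-degree would force $z\mid N-D$, i.e.\ $f(0,\cdot)\equiv 1$, i.e.\ $h(i,\cdot)$ constant. This is false, since $h(i,w)=-1/(\varphi(i)+w-1/w)$.
\item A drop in the $w$-degree would make $h(\cdot,i)=-1/(\varphi+2i)$ constant, contradicting that $\varphi$ is nonconstant.
\end{itemize}
This is the exact counterpart of the paper's step $k_1=k_2=0$.

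Note also that the coefficient of $z^mw^2$ in $p$ genuinely vanishes, reflecting $f(0,0)=1$. The check concerns degrees in each variable, not that corner coefficient.

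\textbf{Two smaller remarks.} For the ``only if'' direction, observe that for any scattering stable $p$ with $f=(p+\tilde p)/(p-\tilde p)$, $p-\tilde p$ is the reduced denominator of $f$ up to a constant. So reducibility does not depend on your choice of $p$. Your hedge about subtracting $\re h(i,i)$ is unnecessary, since zero polynomial part already gives $\re h(i,i)=0$.
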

\begin{proof}
    Define   
    \begin{equation*}
        f(z,w) :=-\frac{i}{\eta}h(\gamma(z),\gamma(w)), \quad \eta:=\im h(i,i)>0,
    \end{equation*}
    where $\gamma:\D\to\upper$ is the Cayley transform $z\mapsto i(1+z)/(1-z)$. This is the bidisc equivalent of $h$; $f$ is holomorphic on $\D^2$, rational, and satisfies $\re f > 0$ on $\D^2$. It has also been normalized so that
    \begin{equation*}
        f(0) = -\frac{i}{\eta}h(i,i)=  -\frac{i}{\eta}(\re h(i,i)+i\im h(i,i)) = -\frac{i}{\eta} \re h(i,i) +1 =1,
    \end{equation*}
    where $\re h(i,i) = 0$ follows from the assumption that $h$ has zero polynomial part, see \cite[Corollary 4.6(iii)]{Luger_2019}.
    In conclusion, $f\in\text{\emph{PRP}}_2$. Since the representing measure of $h$ is extremal, the same argument used in Section~\ref{section: first_counterexample} (where $\eta=1/4$) shows that $f$ is an extreme point in $\text{\emph{PRP}}_2$.

    Now consider the function
    \begin{equation}\label{eq: F}
        F:=\frac{f-1}{f+1}.
    \end{equation}
    This function possesses a few important properties. Firstly, since $h$ is real rational and therefore is real a.e on $\R^2$, there is for a.e $(\zeta_1,\zeta_2)\in\mathbb{T}^2$ a real number $y=- h((\gamma(\zeta_1),\gamma(\zeta_2)))/\eta$ such that
    \begin{equation*}
        \abs{F(\zeta)} = \bigg| \frac{f(\zeta)-1}{f(\zeta)+1} \bigg| = \bigg| \frac{iy-1}{iy+1} \bigg| = 1.
    \end{equation*}
    Secondly, $\abs{F} < 1$ on $\D^2$, because $\re f > 0$ and therefore
    \begin{equation*}
        \abs{f+1}^2 = (\re f +1)^2+ (\im f)^2 >  (\re f -1)^2+ (\im f)^2 = \abs{f-1}^2
    \end{equation*}
    on $\D^2$. Finally, $F$ is rational. Together, this makes $F$ a rational inner function. By a standard result on rational inner functions \cite{rudinPolydiscs, RudinStout}, we can write $F=z^{k_1}w^{k_2}\tilde{r}/r$ for nonnegative integers $k_1$ and $k_2$, a stable polynomial $r$, and its reflection $\tilde{r}$. We may assume that $r$ and its reflection are coprime, so that $r$ is scattering stable. Since $F(0,\cdot)$ and $F(\cdot,0)$ do not vanish everywhere, we have $k_1=k_2=0$, and hence $F=\tilde{r}/r$.
    
    Combining \eqref{eq: F} and $F=\tilde{r}/r$ yields
    \begin{equation}\label{eq: f}
        f=\frac{r+\tilde{r}}{r-\tilde{r}}.
    \end{equation}
    With this, the conditions in Conjecture~\ref{conj: Knese} are met; indeed, $r$ is scattering stable and $f$ is of the correct form and an extreme point in $\text{\emph{PRP}}_2$. The conjecture thus asserts that $r-\tilde{r}$ is irreducible. By assumption of any of the conditions in Proposition~\ref{prop: reducibility}, $q$ is reducible, and so we are done once we show that $r-\tilde{r}$ is reducible if and only if $q$ is reducible. Denote the bidegree of $h$ as $(m,2)$, $m\geq 1$, and write $f =-\frac{i}{\eta}\frac{p(\gamma,\gamma)}{q(\gamma,\gamma)}=\frac{P}{Q}$ with
     \begin{align*}
        P(z,w) & := -\frac{i}{\eta} (1-z)^m(1-w)^2p\bigg(i\frac{1+z}{1-z},i\frac{1+w}{1-w}\bigg) \quad\text{and}\quad \\
        Q(z,w) & := (1-z)^m(1-w)^2 q\bigg(i\frac{1+z}{1-z},i\frac{1+w}{1-w}\bigg).
    \end{align*}
    Since the bidegree of $h$ and therefore of $q$ is $(m,2)$, $P$ and $Q$ are coprime polynomials in $\C[z,w]$. Since $r+\tilde{r}$ and $r-\tilde{r}$ from \eqref{eq: f} also are coprime, it follows that $Q $ is a nonzero constant multiple of $ r-\tilde{r}$. Since $Q$ is reducible if and only if $q$ is reducible, the proof concludes.
\end{proof}

\begin{example}
    An infinite family of counterexamples to Conjecture~\ref{conj: Knese} is generated from $\varphi = \mathcal{J}^{\circ k}$, $k\geq 1$. The $k$'th member of this family has bidegree $(2^k,2)$, and $U$ from Proposition~\ref{prop: reducibility} is given by $\mathcal{J}^{\circ (k-1)}$. For example, with $k=1$ we recover the bidegree $(2,2)$ function \eqref{eq: final example RR function} considered in Section~\ref{section: extremal_example}. It has a reducible denominator and therefore produces a counterexample to the irreducibility aspect of Conjecture~\ref{conj: Knese}, as shown previously. Its measure is recovered from Remark~\ref{rem: general extremal measure} with $U(z)=z$. We obtain a higher degree example by letting $k=2$. Then
    \begin{equation*}
        \varphi = \mathcal{J}\circ\mathcal{J}=z-\frac{1}{z} - \frac{1}{z-\frac{1}{z}} = z-\frac{1}{z} -\frac{1/2}{z-1}-\frac{1/2}{z+1},
    \end{equation*}
    which yields the bidegree $(4,2)$ real rational Herglotz-Nevanlinna function
    \begin{equation}\label{eq: more complicated example}
        h(z,w)=-\frac{z(z^2-1)w}{(z^2+zw-1)((z^2-1)w-z)}.
    \end{equation}
    Thus, as  discussed in Remark~\ref{rem: general extremal measure}, the extremality of the measure cannot be inferred from the irreducibility of its support, and there are \HNf s with the same denominator as the factors in \eqref{eq: more complicated example}. Indeed,
    \begin{equation*}
        -\frac{1}{z-\frac{1}{z}+w} = -\frac{z}{z^2+zw-1} \quad\text{and}\quad -\frac{1}{w-\frac{1}{z-\frac{1}{z}}} = -\frac{z^2-1}{(z^2-1)w-z},
    \end{equation*}
    are \HNf s as they are compositions of \HNf s.
\end{example}

\appendix 

\section{\bfseries Parametrization of the real zero set}\label{app: A}

\begin{proof}[Proof of Lemma~\ref{lemma: parametrization of zero set}]
    If $m=0$ then $h$, and hence $q$, is independent of $z$. As a polynomial in $w$, $q$ then has $n$ simple zeros $r_1,\dots,r_n$ and the conclusion follows by taking $S=\varnothing$ and $g_j\equiv r_j$. Thus assume $m\geq 1$. The idea now is to first show that these real-analytic functions $g_1,\dots,g_n$ exist and satisfy
    \begin{equation}\label{eq: 1st inclusion}
        \Z(q)\cap ((\R\setminus S)\times \R) \subset { \bigcup_{j=1}^n \{(t,g_j(t)):\,t\in \R\setminus S \}}.
    \end{equation}
    Then we show that any point in $\Z(q)$ on $S\times\R$ is a limit point of a sequence in this union of graphs, which yields
    \begin{equation}\label{eq: 2nd inclusion}
        \Z(q)\cap \Rtwo \subset \overline{ { \bigcup_{j=1}^n \{(t,g_j(t)):\,t\in \R\setminus S \}}}.
    \end{equation}
    The reverse inclusion is immediate by definition of the $g_j$'s and the fact that $\Z(q)\cap \Rtwo$ is closed.

    Fix $\tau\in\R\setminus S$. By Lemma~\ref{lemma: properties of RR functions}(iii), there are $n$ distinct simple real zeros $r_1,\dots,r_n$ of $q(\tau,\cdot)$. In particular, they satisfy $\frac{\partial}{\partial w} q(\tau,r_j)\neq 0$. The implicit function theorem thus applies and yields an open interval $I_\tau\ni\tau$ and unique real-analytic functions $g_j^\tau:I_\tau\to\R$, $j=1,\dots,n$, such that each of them satisfies $g_j^\tau(\tau)=r_j$ and $q(t,g_j^\tau(t))=0$ for every $t\in I_\tau$. In this way, $\Z(q)$ is parametrized by
    \begin{equation*}
        s=g_1^\tau(t),\quad \dots,\quad s=g_n^\tau(t)
    \end{equation*}
    on $I_\tau\times U$, where $U$ is a union of neighbourhoods of the points $r_j$. In fact, by shrinking $I_\tau$ such that $I_\tau\subset\R\setminus S$, the above extends to a parametrization of $\Z(q)$ on $I_\tau\times\R$, because $q(t,u)=0$ then has exactly $n$ distinct solutions for $(t,u)\in I_\tau\times\R$, which are $(t,g_1(t)),\dots \, (t,g_n(t))$. By ordering the locally parametrizing functions on each such interval $I_\tau$, we obtain real-analytic functions $g_j:\R\setminus S \to \R$, $j=1,\dots, n$ such that $\Z(q)$ is parametrized by
    \begin{equation}\label{eq: global param of Zq}
         s=g_1(t),\quad \dots,\quad s=g_n(t)
    \end{equation}
    on $(\R\setminus S)\times\R$. Inclusion \eqref{eq: 1st inclusion} is thus obtained.

    Now consider any point $(\tau,\sigma)\in\Z(q)\cap\Rtwo$ such that $\tau\in S$. Define
    \begin{equation*}
        r(z,w):=q(z+\tau,w+\sigma).
    \end{equation*}
    Then $r(0,0)=0$ and, by assumption that $\tau-z$ is not a factor of $q$, $r(0,w)$ is not identically zero. Moreover, $r$ is nonzero on $\uppertwo$, and therefore also on $(-\upper)^2$, because its coefficients are real. Thus the intersection $\Z(r)\cap(\uppertwo \cup (-\upper)^2)$ is empty. Applying a version of \cite[Lemma 2.10]{lvl_curves_RIFs} where the roles of the two coordinates are interchanged yields a local parametrization of $\Z(r)$ near $(0,0)$ by
    \begin{equation*}
        w = \Psi_1(z), \quad \dots, \quad w = \Psi_L(z),
    \end{equation*}
    for some integer $0<L\leq n$, where each $\Psi_k$ is analytic. Moreover, each $\Psi_k$ is real on $\R$ near $0$; indeed, choose any $\Psi=\Psi_k$ and note that for $z\in\C$ sufficiently close to $0$, $(z,\Psi(z))\in\Z(r)$. Because $\Z(r)\cap(\uppertwo \cup (-\upper)^2)=\varnothing$, we have $\im\Psi\leq 0$ whenever $\im z\geq 0$, while $\im\Psi(z)\geq 0$ whenever $\im z\leq0$. Continuity across $\R$ then gives $\Psi(t)\in\R$ for $t\in\R$ near $0$. 
    
    The above gives a parametrization
    \begin{equation}\label{eq: 2nd parametrized Zq}
        s = \sigma + \Psi_1(t-\tau), \quad \dots,\quad s = \sigma + \Psi_L(t-\tau)
    \end{equation}
    of $\Z(q)\cap\Rtwo$ locally near $(\tau,\sigma)$, where each $\Psi_k$ is real-analytic. Since $q(\tau,\sigma)=0$, at least one of $\Psi_k$ satisfies $\Psi_k(0)=0$, call it $\Psi$. Recall that $S$ is finite. Therefore, there is a one-sided punctured neighbourhood $N$ of $\tau$ contained in $\R\setminus S$. On $N$, the function $t\mapsto \sigma + \Psi(t-\tau)$ agrees with a unique parametrizing function $g$ from the parametrization \eqref{eq: global param of Zq}. Thus
    \begin{equation*}
    g(t) = \sigma + \Psi(t-\tau)
    \end{equation*}
    throughout this neighbourhood, and consequently,
    \begin{equation*}
        (t,g(t)) = (t, \sigma + \Psi(t-\tau)) \to (\tau,\sigma)
    \end{equation*}
    as $t\to \tau$ within this neighbourhood. Hence, the point $(\tau,\sigma)$ appears as the limit point of the set parametrized by \eqref{eq: global param of Zq}. Inclusion \eqref{eq: 2nd inclusion} has thus been obtained, and the proof concludes.
\end{proof}

\section{\bfseries The representing measure structure formula}\label{app: B}

\begin{lemma}\label{lemma: poisson integral of its boundary values}
    Let $h=p/q$ be a real rational Herglotz-Nevanlinna function of two variables such that $q$ and $\tau-z$ are coprime for all $\tau\in\R$. Assume its polynomial part is constant. Fix $w=w_0\in\upper$. Then $z\mapsto \im h(z,w_0)$ is the Poisson integral of its boundary values on $\R$.
\end{lemma}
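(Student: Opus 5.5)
The plan is to use the one-variable Herglotz-Nevanlinna theory for the slice $k(z):=h(z,w_0)$. The target is to show that in the one-variable representation of $k$ the linear term $b$ vanishes, and the measure is absolutely continuous with density $\im k(t)$, where $\im k(t)$ denotes the boundary values on $\R$.

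First, $k$ is holomorphic on $\upper$ with $\im k\geq 0$, so it is a one-variable Herglotz-Nevanlinna function. Moreover, $k$ is rational in $z$ with complex coefficients, being $p(z,w_0)/q(z,w_0)$. Hence
\begin{equation*}
\im k(z)=b\im z+\frac1\pi\int_\R \frac{\im z}{|t-z|^2}\,d\nu(t)
\end{equation*}
for some $b\geq 0$ and a positive measure $\nu$. By Stieltjes inversion, $\nu$ is determined by the boundary behaviour of $\im k$: its absolutely continuous part has density $\im k(t+i0)$, and its singular part is carried by the real poles of $k$. So the task reduces to two claims: $b=0$, and $k$ has no poles on $\R$.

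For $b=0$, I would use that $b=\lim_{y\to\infty}k(iy)/(iy)$. Since the polynomial part of $h$ is constant, $\deg_z p\leq \deg_z q=m$. I also need that the leading coefficient of $q(\cdot,w_0)$ in $z$ does not vanish. Write $q(z,w)=\sum_k q_k(w)z^k$. The coefficient $q_m(w)$ is a polynomial in $w$ which is nonzero on $\upper$, by a Hurwitz-type argument: if $q_m(w_1)=0$ for some $w_1\in\upper$, then $q(z,w_1)$ would have a root escaping to infinity in $z$ as $w\to w_1$, which I would rule out using stability of $q$ on $\uppertwo$ via the reversed polynomial in $z$. Granting this, $k(iy)$ stays bounded as $y\to\infty$, so $b=0$. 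Alternatively, \cite[Corollary 4.6]{Luger_2019} gives $b_1=0$ directly for $h$, and the slice inherits this through the representation \eqref{eq: integral representation}, since the kernel integral is $o(y)$ along $z=iy$.

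For the absence of real poles, the key point is that $q(\cdot,w_0)$ should have no real zeros. Suppose $q(t_0,w_0)=0$ with $t_0\in\R$. Because $q$ has real coefficients, the polynomial $w\mapsto q(t_0,w)$ has only real zeros, by Lemma~\ref{lemma: properties of RR functions}(i),(iii). This holds even for $t_0\in S$ provided $q(t_0,\cdot)\not\equiv0$, and that is guaranteed by coprimality with $\tau-z$. A zero at $w_0\in\upper$ is therefore impossible. Consequently $k$ is continuous up to $\R$, and $\nu$ has no singular part on bounded sets. The point at infinity has already been handled by $b=0$. It follows that $\nu=\im k(t)\,dt$, so $\im k$ equals the Poisson integral of its boundary values.

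I expect the main obstacle to be the leading-coefficient/degree step, i.e.\ making sure that no mass escapes to infinity and that $k$ is bounded at infinity. The fallback is the direct argument from the integral representation \eqref{eq: integral representation} with $b_1=0$.
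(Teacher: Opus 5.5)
Your proposal is correct, but it takes a somewhat different route from the paper.

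\textbf{The paper's route.} The slice $k=h(\cdot,w_0)$ inherits a constant polynomial part, so its denominator has at least the degree of its numerator in $z$. Euclidean division and partial fractions then write $k$ as a constant plus terms $c/(d-z)^j$. Lemma~\ref{lemma: properties of RR functions}(i) places every pole $d$ in the open lower half-plane, so $\im k$ is bounded and continuous on $\overline{\upper}$. The Poisson representation of bounded harmonic functions on the half-plane then finishes the proof.

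\textbf{Your route.} You stay within Herglotz-Nevanlinna theory. You write the one-variable representation of $\im k$ and use the same input from Lemma~\ref{lemma: properties of RR functions}(i) to get continuity of $k$ up to $\R$. You then identify the measure as $\im k(t)\,dt$ by Stieltjes inversion, and remove the linear term by boundedness at infinity.

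\textbf{Comparison.} The two essential facts are the same in both proofs: $q(\cdot,w_0)$ has no real zeros, and the slice has no growth at infinity. The paper uses a uniqueness theorem for bounded harmonic functions where you use Stieltjes inversion; the paper's version is shorter, while yours exhibits the representing measure of the slice explicitly.

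\textbf{Behaviour at infinity.} A real merit of your write-up is that it treats this step explicitly. The paper passes over it with the remark that the constant polynomial part carries over to the slice. Both of your justifications work.
\begin{itemize}
\item For the kernel argument: after dividing by $y\geq 1$, the integrand in \eqref{eq: integral representation} at $z=iy$ is dominated by a constant (depending on $w_0$) times $\prod_{j}(1+t_j^2)^{-1}$, and it tends to zero pointwise. Dominated convergence therefore gives $o(y)$.
\item For the Hurwitz argument: the reversal must be $z^m q(-1/z,w)$ rather than $z^m q(1/z,w)$, since $z\mapsto -1/z$ preserves $\upper$. Letting $z=i\varepsilon\to 0$ shows that $q_m$ is either identically zero or zero-free on $\upper$, and it is not identically zero. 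Together with $\deg_z p\leq m$, this gives $k(iy)\to p_m(w_0)/q_m(w_0)$.
\end{itemize}
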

\begin{proof}
    The polynomial part of $h$ is constant, and hence the same applies to the sliced function $h(\cdot,w_0)$. As such, in the variable $z$, the degree of the denominator is greater than or equal to the degree of the numerator. Euclidean division and then partial fraction decomposition allow us to write $z\mapsto h(z,w_0)$ as a sum of a constant and fractions of the form $c/(d-z)^k$ for some integer $k\geq 1$ and constants $c,d\in\C$. It follows from Lemma~\ref{lemma: properties of RR functions}(i) that the denominator of $z\mapsto h(z,w_0)$ is nonzero on $\R$, in addition to $\upper$, and hence $\im d <0$.  Consequently, $z\mapsto \im h(z,w_0)$ is a harmonic function on $\upper$ which is bounded and continuous on $\overline{\upper}$. In particular, it is the Poisson integral of its boundary values on $\R$; see e.g. \cite[Theorem 7.5]{axler}.
\end{proof}

\begin{proof}[Proof of Theorem~\ref{thm: structure formula representing measures}]
    Write $\mu=\nu+\sigma$, where $\nu$ and $\sigma$ are the unique representing Nevanlinna measures of $\sum_{k=1}^l \frac{c_{z}(\tau_k)}{\tau_k-z}$ and $\tilde{h}$, respectively. If $q$ is coprime with $\tau-z$ for all $\tau\in\R$ then the sum is empty, in which case $\nu\equiv 0$ and $\tilde{h}=h$. Define     
    \begin{equation*}
          \nu' := \pi\sum_{k=1}^lc_{z}(\tau_k)(\delta_{\tau_k}\times\lambda|_{\R\setminus S})
    \end{equation*}
    and
    \begin{equation*}
         \sigma' :=  \pi \sum_{j=1}^n (\gamma_j)_*(\alpha_j\lambda|_{\R\setminus S}).
    \end{equation*}
    We now claim that $\nu=\nu'$ and $\sigma=\sigma'$. The formula for $\nu$ is confirmed through direct computation; indeed, by standard residue calculus we have
    \begin{equation*}
        \int_{\R} K_2((z,w),(t_1,t_2))\,dt_2 = \pi K_1(z,t_1),
    \end{equation*}
    see \cite[Example 3.5]{Luger_2019}, and hence
    \begin{equation*}
       \frac{1}{\pi^2} \int_{\Rtwo} K_2((z,w),(t_1,t_2))\,d\nu' = \sum_{k=1}^lc_{z} \int_{\R\setminus S}  K_1(z,t_1)\,\delta_{\tau_k} = \sum_{k=1}^lc_{z}\bigg(\frac{1}{\tau_k-z}-\frac{\tau_k}{\tau_k^2+1} \bigg).
    \end{equation*}
    It follows by uniqueness of $\nu$ that $\nu=\nu'$, see \cite[Corollary 4.7]{Luger_2019}.
    
    It remains to show that $\sigma=\sigma'$. If $\tilde{h}$ has degree zero in $z$, then $\tilde{h}$ is a one-variable real rational \HNf{} of $w$. By the same computation as for $\nu$ above we find that 
    
    Note that the polynomial part of $\tilde{h}$ is constant just like the polynomial part of $h$, but it does not vanish necessarily. For convenience, we allow a slight abuse of notation and write the reduced function $\tilde{h}=\frac{\tilde{p}}{\tilde{q}}$ as $h=\frac{p}{q}$ throughout the rest of the proof.
    
    Let $w_0\in\upper$ be arbitrary and set $\Phi_{w_0}(z):= \im h(z,w_0)$. By Lemma~\ref{lemma: poisson integral of its boundary values}, $\Phi_{w_0}$ is the Poisson integral of its boundary values:
    \begin{equation}\label{eq: 2nd eq}
        \Phi_{w_0}(z) = \frac{1}{\pi}\int_{\R} \Phi_{w_0}(t)\mathcal{P}_1(z,t)\,dt, \quad z\in\upper.
    \end{equation}
    Recall the finite set $S$ from Lemma~\ref{lemma: properties of RR functions}(iii). Let $t_0\in\R\setminus S$ be arbitrary and set $\Psi_{t_0}(w):= h(t_0,w)$. Because the denominator $q$ of $h$ is coprime with $t_0-z$, the sliced function $\Psi_{t_0} =: \frac{p_{t_0}}{q_{t_0}}$ is a real rational Herglotz-Nevanlinna function of one variable (see Lemma~\ref{lemma: properties of RR functions}(i)) such that $q_{t_0}=q(t_0,\cdot)$ (see Lemma~\ref{lemma: properties of RR functions}(iii)). We therefore also have $p_{t_0}=p(t_0,\cdot)$. In accordance with Section~\ref{section: onevar_prelims}, the representing measure of $\Psi_{t_0}$ is given by
    \begin{equation*}
        \pi\sum_{j=1}^n\bigg|\frac{p_{t_0}(r_j)}{q_{t_0}'(r_j)}\bigg|\delta_{r_j},
    \end{equation*}
    where $r_1,\dots,r_n$ are the $n$ simple real roots of $q_{t_0}$. Since $q_{t_0}=q(t_0,\cdot)$, the denominator of $\Psi_{t_0}$ retains the degree of $q$ in $w$ and hence the linear coefficient of $\Psi_{t_0}$ is zero because those of $h$ are. By Corollary~\ref{cor: main cor}(ii) we thus have
    \begin{equation}\label{eq: im psi_t0}
        \im \Psi_{t_0}(w)  = \int_{\R}\mathcal{P}_1(w,t) \sum_{j=1}^n\bigg|\frac{p_{t_0}(r_j)}{q_{t_0}'(r_j)}\bigg|\,d\delta_{r_j}(t) 
         = \sum_{j=1}^n\bigg|\frac{p_{t_0}(r_j)}{q_{t_0}'(r_j)}\bigg| \mathcal{P}_1(w,r_j), \quad w\in\upper. 
    \end{equation}
    Note that $q_{t_0}'(r_j)=\frac{\partial q}{\partial w}(t_0,r_j)$. Moreover, the real-analytic functions
    \begin{equation*}
        g_1,\dots,g_n:\R\setminus S \to\R
    \end{equation*}
    from Lemma~\ref{lemma: parametrization of zero set} satisfy $r_j = g_j(t_0)$ for each $j$. In addition, at $(t_0,w_0)\in\R\times\upper$, we have $\im\Psi_{t_0}(w_0)=\Phi_{w_0}(t_0)$. By this, we may rewrite \eqref{eq: im psi_t0} as
    \begin{equation}\label{eq: 3rd eq}
        \Phi_{w_0}(t_0) =  \sum_{j=1}^n \alpha_j(t_0)\mathcal{P}_1({w_0},g_j(t_0)).
    \end{equation}
    Since $t_0\in\R\setminus S$ and $w_0\in\upper$ were arbitrary, and $S$ is finite, we can substitute \eqref{eq: 3rd eq} into \eqref{eq: 2nd eq} and thus obtain
    \begin{equation*}
        \im h(z,w) = \frac{1}{\pi} \sum_{j=1}^n \int_{\R\setminus S} \Ptwo((z,w),(t,g_j(t))) \alpha_j(t)\,dt = \frac{1}{\pi^2}\int_{\Rtwo}\Ptwo((z,w),t)\,d\sigma'(t), \quad (z,w)\in\uppertwo.
    \end{equation*}
    Evaluating the above at $(z,w)=(i,i)$ shows that $\sigma'$ satisfies the growth condition \eqref{eq: growth condition Nevanlinna measure}, and hence $\int_{\Rtwo}K_2((z,w),t)\,d\sigma'(t)$ is well-defined for $(z,w)\in\uppertwo$. Similar to before, it now follows by uniqueness of $\sigma$ that $\sigma=\sigma'$, see \cite[Corollary 4.7]{Luger_2019}. 
\end{proof}

\section{\bfseries An auxiliary lemma}\label{app: X}
On the topic of extremal measures, we need the following lemma. It is an application of the Fatou theorem for Poisson integrals of functions in $L^\infty(\R,\R;\lambda)$.

\begin{lemma}\label{lemma: H is identically zero}
    If $u\in L^\infty(\R,\R;\lambda)$ and
    \begin{equation*}
        \int_{\R} \frac{u(t)}{(t-z)^2}\,dt=0, \quad z\in\upper,
    \end{equation*}
    then $u$ is a.e constant.
\end{lemma}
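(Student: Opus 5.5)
The plan is to recognize the integral as the derivative of a Cauchy-type transform, integrate it once, and then use Fatou's theorem for Poisson integrals to recover $u$ from boundary values.

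\textbf{Step 1: the Cauchy-type transform.} Since $u\in L^\infty(\R,\R;\lambda)$, the function
\begin{equation*}
    G(z):=\int_{\R} u(t)\bigg(\frac{1}{t-z}-\frac{t}{t^2+1}\bigg)\,dt, \quad z\in\upper,
\end{equation*}
is well defined. Indeed, the bracket is $O(t^{-2})$ locally uniformly in $z$. So $G$ is holomorphic on $\upper$, and differentiating under the integral sign (justified by the same domination) gives
\begin{equation*}
    G'(z)=\int_{\R}\frac{u(t)}{(t-z)^2}\,dt .
\end{equation*}
By hypothesis $G'\equiv 0$, so $G$ is constant on the connected set $\upper$.

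\textbf{Step 2: pass to the imaginary part.} We have
\begin{equation*}
    \im G(z)=\int_{\R}u(t)\frac{\im z}{|t-z|^2}\,dt=\pi\,(P[u])(z),
\end{equation*}
where $P[u]$ denotes the Poisson integral of $u$ with respect to the normalized kernel $\frac{1}{\pi}\mathcal{P}_1$. Hence $P[u]$ is a constant $c$ on $\upper$.

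\textbf{Step 3: Fatou's theorem.} Since $u\in L^\infty$, Fatou's theorem for Poisson integrals gives $P[u](x+iy)\to u(x)$ as $y\to0^+$ for a.e.\ $x\in\R$. Therefore $u=c$ a.e.

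\textbf{Main obstacle.} The only delicate point is justifying the differentiation under the integral sign in Step 1, or equivalently the choice of the convergence-producing term $t/(t^2+1)$. A dominating function of order $(1+t^2)^{-1}$, uniform for $z$ in compact subsets of $\upper$, handles this.

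An alternative route, if one prefers to avoid $G$, is to take imaginary parts directly. The hypothesis gives $\partial_x P[u]=0$ and $\partial_y P[u]=0$, because $\partial_{\bar z}$ and $\partial_z$ of the Poisson kernel are expressible through $(t-z)^{-2}$ and its conjugate, and $u$ is real. This again forces $P[u]$ to be constant, and Step 3 concludes as before.
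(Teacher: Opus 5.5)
Your proof is correct and is essentially the paper's argument. The paper writes $1/(t-z)^2=\partial_y\mathcal{P}_1+i\,\partial_x\mathcal{P}_1$ to conclude that the Poisson integral of $u$ has vanishing gradient, then applies Fatou's theorem; this is exactly your alternative route, and your main route via the regularized Cauchy transform $G$ rests on the same identity, since by Cauchy--Riemann $G'=\pi(\partial_y P[u]+i\,\partial_x P[u])$. The one thing your version adds is an explicit domination justifying differentiation under the integral sign, which the paper leaves implicit.
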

\begin{proof}
    Recall the one-dimensional Poisson kernel
    \begin{equation*}
        \mathcal{P}_1(x+iy,t)=\frac{y}{(t-x)^2+y^2},
    \end{equation*}
    and notice that 
    \begin{equation*}
        \frac{1}{(t-z)^2} = \frac{(t-x)^2-y^2}{((t-x)^2+y^2)^2} + i\frac{2y(t-x)}{((t-x)^2+y^2)^2} = \partial_y\mathcal{P}_1(x+iy,t) + i \partial_x\mathcal{P}_1(x+iy,t).
    \end{equation*}
    By considering the real and imaginary parts of the integral, we thus obtain the two equations
    \begin{equation}\label{eq: re and im integrals}
        \int_{\R} u(t) \partial_y\mathcal{P}_1(x+iy,t) \,dt = 0 \quad\text{and}\quad \int_{\R} u(t)\partial_x\mathcal{P}_1(x+iy,t) \,dt = 0, \quad x+iy\in\upper.
    \end{equation}
    The function
    \begin{equation*}
        (x,y)\mapsto \frac{1}{\pi}\int_{\R} u(t) \mathcal{P}_1(x+iy,t) \,dt
    \end{equation*}
    is the Poisson integral of $u\in L^\infty(\R,\R;\lambda)$, which by the Fatou theorem for Poisson integrals of functions in $L^\infty(\R,\R;\lambda)$ has nontangential limit $u$ for a.e point in $\R$, see e.g. \cite[Theorem 7.28]{axler}. Interchanging integration and differentiation in \eqref{eq: re and im integrals} shows that this Poisson integral is constant, which means that $u$ is constant a.e.
\end{proof}

\section*{AI disclosure}

OpenAI's model \emph{GPT-5.6 sol} was used in the final stages of the preparation of the original manuscript for identifying potential mathematical or logical errors. During this process, the model pointed out that the function \eqref{eq: final example RR function}, for which it had been shown in the text that the representing measure is extremal, could be transformed to the bidisc setting and by this would also serve as a counterexample to  Knese's conjecture. This hint led to Section~\ref{section: first_counterexample}, and subsequently motivated the development of Section~\ref{section: reducibility}. The mathematical arguments and proofs in these subsections were developed and verified by the authors. No other significant contributions were made by AI.

\bibliographystyle{amsplain}
\bibliography{bibliographyFile}

\end{document}